\documentclass[12pt]{article}
\usepackage{amssymb,amsmath,amsfonts,amssymb}
\usepackage{amssymb,mathrsfs,bm}
\usepackage[round,comma]{natbib}
\usepackage{graphicx}
\usepackage[all]{xy}
\usepackage{color}

\setlength{\oddsidemargin}{0in}
\setlength{\evensidemargin}{0in}
\setlength{\topmargin}{-.5in}
\setlength{\headsep}{0in}
\setlength{\textwidth}{6.5in}
\setlength{\textheight}{8.5in}

\newtheorem{theorem}{Theorem}

\newtheorem{corollary}[theorem]{Corollary}

\newtheorem{definition}[theorem]{Definition}

\newtheorem{lemma}[theorem]{Lemma}

\numberwithin{equation}{section} \numberwithin{theorem}{section}
\newenvironment{proof}[1][Proof]{\noindent\textbf{#1.} }{\ \rule{0.5em}{0.5em}}

\begin{document}

\vspace{2cm}

{\textbf{\Large{This article has been accepted
for publication \\ on November 2014}}}

\vspace{3cm}

\textbf{To cite this article}:
Subhash C. Kochar and Nuria Torrado,
On stochastic comparisons of largest order statistics
in the scale model,
to appear in \emph{Communications in Statistics - Theory and Methods}.

\vspace{3cm}

Thanks


\newpage

\title{On stochastic comparisons of largest order statistics\\
in the scale model}
\author{ Subhash C. Kochar \\
\small{Fariborz Maseeh Department of Mathematics and Statistics}\\
\small{Portland State University, Portland, OR 97006, USA}\\
Nuria Torrado \\
\small{Centre for Mathematics, University of Coimbra}\\
\small{Apartado 3008, EC Santa Cruz, 3001-501 Coimbra, Portugal} \\
}
\maketitle

\begin{abstract}
Let $X_{\lambda _{1}},X_{\lambda _{2}},\ldots ,X_{\lambda _{n}}$ be
independent nonnegative random variables with $X_{\lambda _{i}}\sim
F(\lambda _{i}t)$, $i=1,\ldots ,n$, where $\lambda _{i}>0$, $i=1,\ldots ,n$
and $F$ is an absolutely continuous distribution. It is shown that, under
some conditions, one largest order statistic $X_{n:n}^{\lambda }$ is smaller
than another one $X_{n:n}^{\theta }$ according to likelihood ratio ordering.
Furthermore, we apply these results when $F$ is a generalized gamma
distribution which includes Weibull, gamma and exponential random variables
as special cases.\newline

\textbf{Keywords}: likelihood ratio order; reverse hazard rate order;
majorization; order statistics

\textbf{Mathematics Subject Classification (2010)} 62G30 ; 60E15 ; 60K10
\end{abstract}

\bigskip


\section{Introduction}

Let $F$ be the distribution function  of some nonnegative random variable $X$%
. Then the independent random variables $X_{\lambda _{1}},X_{\lambda
_{2}},\ldots ,X_{\lambda _{n}}$ follow the scale model if there exists $%
\lambda _{1}>0,\ldots ,\lambda _{n}>0$ such that, $F_{i}(t)=F(\lambda _{i}t)$
for $i=1,\ldots ,n$. F is called the baseline distribution and the $\lambda
_{i}^{\prime }$s are the scale parameters. Recently, \cite{Khaledi:2011}
studied conditions under which series and parallel systems consisting of
components with lifetimes from the scale family of distributions are ordered
in the hazard rate and the reverse hazard rate orderings, respectively. In
this paper we revisit this problem and broaden the scope of their results to
likelihood ratio ordering, which is stronger than the other orderings.

There is an extensive literature on stochastic orderings among order
statistics and spacings when the observations follow the exponential
distribution with different scale parameters, see for instance, \cite%
{Kochar:1996, Dykstra:1997, Bon:1999, Khaledi:2000, Kochar:2009, Joo:2010,
Torrado:2010, Torrado:2013} and the references therein. Also see a review
paper by \cite{Kochar:2012} on this topic. A natural way to extend these
works is to consider the scale model since it includes the exponential
distribution, among others. The scale model, also known in the literature as
the proportional random variables (PRV) model, is of theoretical as well as
practical importance in various fields of probability and statistics and
has been investigated in \cite{Pledger:1971}, \cite{Hu:1995} and \cite%
{Torrado:2012}, among others.

In this article, we focus on stochastic orders to compare the magnitudes of
two largest order statistics from the scale model when one set of scale
parameters majorizes the other. The new results obtained here are applied
when the baseline distributions are generalized gamma distributions. Recall
that a random variable $X$ has a generalized gamma distribution, denoted by $%
X\sim GG(\beta ,\alpha )$, when its density function has the following form%
\begin{equation*}
f(t)=\frac{\beta }{\Gamma (\frac{\alpha }{\beta })}x^{\alpha -1}e^{-x^{\beta
}},t>0,
\end{equation*}%
where $\beta ,\alpha >0$ are the shapes parameters.
The importance of this distribution lies in its flexibility in describing lifetime distributions
ensuring their applications in
survival analysis and reliability theory.
It is of great interest
in several areas of application, see for example, \cite{Manning:2005, Ali:2008} and \cite{Chen:2012}.
It is well known that generalized gamma distribution includes
many important distributions
like exponential, Weibull  and gamma as special cases.
We also present  some  new results which strengthen  some of those established
earlier in the literature by \cite{Zhao:2011}, \cite{Misra:2013} and \cite%
{ZhaoB:2013} for gamma distributions
and \cite{Torrado:2015} for Weibull distributions.
Further results on these subjects
are contained in,
e.g., \cite{Lihong:2005, Khaledi:2007, ZhaoB:2011, Balakrishnan:2013,
Fang:2013}. It may be mentioned that \cite{Gupta:2006} considered
monotonicity of the hazard rate and the reverse hazard rates of series and
parallel systems when the components are dependent.

The rest of the paper is organized as follows. In Section \ref{sec:def}, we
introduce the required definitions. Sections \ref{sec:parallel} and \ref%
{sec:like} are devoted to investigate the reverse hazard rate and likelihood
ratio orderings of largest order statistics considering the general scale
model, respectively.


\section{Basic definitions}

\label{sec:def}

In this section, we review some definitions and well-known notions of
majorization concepts and stochastic orders. Throughout this article
\emph{increasing} and \emph{non-decreasing} will be
used synonymously as \emph{decreasing} and \emph{%
non-increasing}.

 We focus attention in this article on nonnegative random variables. We shall
also be using the concept of majorization in our discussion. Let $\{
x_{(1)}, x_{(2)},\ldots, x_{(n)} \}$ denote the increasing arrangement of
the components of the vector ${\bm x}=\left(x_{1},x_{2},\ldots,x_{n}\right)$.

\begin{definition}\label{def:01}
The vector ${\bm x}$ is said to be majorized by the vector ${\bm y}$,
denoted by ${\bm x}\overset{m}{\leq}{\bm y}$, if
\begin{equation*}
\sum_{i=1}^{j}x_{(i)}\geq\sum_{i=1}^{j}y_{(i)},\quad\text{for }
j=1,\ldots,n-1 \quad \text{ and } \quad
\sum_{i=1}^{n}x_{(i)}=\sum_{i=1}^{n}y_{(i)}.
\end{equation*}
\end{definition}

Functions that preserve the ordering of majorization are said to be
Schur-convex, as one can see in the following definition.

\begin{definition}
\label{ch0:def:04} A real valued function $\varphi$ defined on a set $%
\mathcal{A}\in\Re^{n}$ is said to be \emph{Schur-convex} (\emph{Schur-concave%
}) on $\mathcal{A}$ if
\begin{equation*}
{\bm x} \overset{m}{\leq}{\bm y}\, \text{on }\,\mathcal{A}\Rightarrow \varphi({\bm x}
)\leq (\geq) \varphi({\bm y} ).
\end{equation*}
\end{definition}


Replacing  the equality in Definition \ref{def:01} by a corresponding inequality
leads to the concept of weak majorization. One can majorize from above
or below. The following definition addresses majorization from above. It is  also called \emph{supermajorization}.

\begin{definition}
The vector ${\bm x}$ is said to be weakly majorized by the vector ${\bm y}$,
denoted by ${\bm x}\overset{w}{\leq}{\bm y}$, if
\begin{equation*}
\sum_{i=1}^{j}x_{(i)}\geq\sum_{i=1}^{j}y_{(i)},\quad\text{for } j=1,\ldots,n.
\end{equation*}
\end{definition}

It is known that ${\bm x}\overset{m}{\leq}{\bm y}\Rightarrow {\bm x}\overset{w}{\leq}{\bm y}$.
The converse is, however, not true. For extensive and comprehensive details
on the theory of majorization orders and their applications, please refer to
the book of \cite{Marshall:2011}.

Let $X$ and $Y$ be univariate random variables with cumulative distribution
functions (c.d.f.'s) $F$ and $G$, survival functions $\Bar{F}\left(= 1 -
F\right)$ and $\Bar{G}\left(= 1 - G\right)$, p.d.f.'s $f$ and $g$, hazard
rate functions $h_{F}\left(=f/\,\Bar{F}\right)$ and $h_{G}\left(=g/\,\Bar{G}%
\right)$, and reverse hazard rate functions $r_{F}\left(=f/F\right)$ and $%
r_{G}\left(=g/G\right)$, respectively. The following definitions introduce
stochastic orders, which are considered in this article, to compare the
magnitudes of two random variables. For more details on stochastic
comparisons, see \cite{Shaked:2007}.

\begin{definition}
We say that $X$ is smaller than $Y$ in the:

\begin{itemize}
\item[a)] usual stochastic order if $\Bar{F}(t)\leq\Bar{G}(t)$ for all $t$
and in this case, we write $X\leq_{st}Y$,

\item[b)] reverse hazard rate order if $G(t)/F(t)$ is increasing in $t$ for
which the ratio is well defined, or if $r_{F}(t)\leq r_{G}(t)$, for all $t$,
denoted by $X\leq_{rh}Y$,

\item[c)] likelihood ratio order if $g(t)/f(t)$ is increasing in $t$ for
which the ratio is well defined, for all $t$, denoted by $X\leq_{lr}Y$.
\end{itemize}
\end{definition}



\section{Reverse hazard rate ordering results}

\label{sec:parallel}

Let $X_{\lambda _{1}},X_{\lambda _{2}},\ldots ,X_{\lambda _{n}}$ be
independent nonnegative random variables with $X_{\lambda _{i}}\sim
F(\lambda _{i}t)$, $i=1,\ldots ,n$, where $\lambda _{i}>0,i=1,\ldots ,n$ and
$F$ is an absolutely continuous distribution. Let $f$, $h$ and $r$\ be the
density, hazard rate and the reverse hazard rate functions of $F$,
respectively. The distribution function of $X_{n:n}^{\lambda }$, the largest
order statistic formed from $X_{\lambda _{1}},X_{\lambda _{2}},\ldots
,X_{\lambda _{n}}$ is
\begin{equation*}
F_{n:n}^{\lambda }(t)=\prod\limits_{i=1}^{n}F(\lambda _{i}t),
\end{equation*}%
and its reverse hazard rate function is%
\begin{equation}
r_{n:n}^{\lambda }(t)=\sum_{i=1}^{n}\lambda _{i}r(\lambda _{i}t).
\label{ecrh}
\end{equation}

\cite{Khaledi:2011} proved the following result on comparing two parallel
systems when the underlying random variables follow the scale model and
their scale parameters majorize each other.

\begin{theorem}
\label{rhmto} Let $X_{\lambda _{1}},\ldots ,X_{\lambda _{n}}$ be independent
nonnegative random variables with $X_{\lambda _{i}}\sim F(\lambda _{i}t)$, $%
i=1,\ldots ,n$, where $\lambda _{i}>0$, $i=1,\ldots ,n$ and $F$ is an
absolutely continuous distribution. Let ${r}$ be the reverse hazard rate
function of $F$, respectively. If $t^{2}{r}\,^{\prime }(t)$ is increasing in
$t$, then
\begin{equation*}
\left( \lambda _{1},\ldots ,\lambda _{n}\right) \overset{m}{\leq}\left( \theta
_{1},\ldots ,\theta _{n}\right) \Rightarrow X_{n:n}^{\lambda }\leq
_{rh}X_{n:n}^{\theta }.
\end{equation*}
\end{theorem}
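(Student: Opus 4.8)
The plan is to work directly with the reverse hazard rate function of the largest order statistic, for which we have the explicit additive formula \eqref{ecrh}. By the characterization of the reverse hazard rate order in part (b) of the definition, it suffices to show that
\begin{equation*}
\left( \lambda _{1},\ldots ,\lambda _{n}\right) \overset{m}{\leq}\left( \theta _{1},\ldots ,\theta _{n}\right) \Longrightarrow r_{n:n}^{\lambda }(t)\leq r_{n:n}^{\theta }(t)\quad\text{for all }t>0.
\end{equation*}
Fix $t>0$ and define $\varphi _{t}(\lambda _{1},\ldots ,\lambda _{n})=r_{n:n}^{\lambda }(t)=\sum_{i=1}^{n}\lambda _{i}r(\lambda _{i}t)$. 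Since majorization inequalities are preserved by Schur-convex functions (Definition \ref{ch0:def:04}), the whole statement reduces to proving that $\varphi _{t}$ is Schur-convex on $(0,\infty )^{n}$ for every fixed $t>0$.

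To establish Schur-convexity I would invoke the standard criterion (the Schur--Ostrowski condition): a symmetric, continuously differentiable function $\varphi $ on a symmetric convex set is Schur-convex if and only if, for all $i\neq j$,
\begin{equation*}
\left( \lambda _{i}-\lambda _{j}\right) \left( \frac{\partial \varphi }{\partial \lambda _{i}}-\frac{\partial \varphi }{\partial \lambda _{j}}\right) \geq 0.
\end{equation*}
Symmetry of $\varphi _{t}$ is immediate from the sum form. Since $\varphi _{t}$ is additively separable, $\partial \varphi _{t}/\partial \lambda _{i}=\psi _{t}(\lambda _{i})$ where $\psi _{t}(\lambda )=\frac{d}{d\lambda }\big(\lambda r(\lambda t)\big)=r(\lambda t)+\lambda t\,r^{\prime }(\lambda t)$. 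Hence the Schur--Ostrowski product becomes $(\lambda _{i}-\lambda _{j})(\psi _{t}(\lambda _{i})-\psi _{t}(\lambda _{j}))$, which is nonnegative precisely when $\psi _{t}$ is a nondecreasing function of $\lambda $. So the entire problem collapses to: for each fixed $t>0$, the map $\lambda \mapsto r(\lambda t)+\lambda t\,r^{\prime }(\lambda t)$ is nondecreasing on $(0,\infty )$.

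The remaining step is to translate this monotonicity into the hypothesis "$t^{2}r^{\prime }(t)$ is increasing in $t$." Writing $u=\lambda t$, I would observe $r(\lambda t)+\lambda t\,r^{\prime }(\lambda t)=r(u)+u\,r^{\prime }(u)=\frac{d}{du}\big(u\,r(u)\big)$, so what must be shown is that $u\mapsto r(u)+u\,r^{\prime }(u)$ is nondecreasing. Differentiating, this is equivalent to $2r^{\prime }(u)+u\,r^{\prime \prime }(u)\geq 0$, i.e. $\frac{1}{u}\frac{d}{du}\big(u^{2}r^{\prime }(u)\big)\geq 0$, i.e. $\frac{d}{du}\big(u^{2}r^{\prime }(u)\big)\geq 0$ for all $u>0$ — which is exactly the assumption that $t^{2}r^{\prime }(t)$ is increasing. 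Thus the chain closes. The only genuinely delicate point, and the one I would be careful about, is the differentiability/regularity bookkeeping: the Schur--Ostrowski criterion needs $\varphi _{t}$ to be $C^{1}$, which requires $r$ to be differentiable, and one should note that if $t^{2}r^{\prime }(t)$ is merely increasing (hence differentiable a.e.) the argument still goes through by a standard approximation or by applying the criterion in its integral form; everything else is a routine substitution $u=\lambda t$ and a sign computation.
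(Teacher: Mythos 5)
Your argument is correct and is essentially the approach the paper takes: the paper states this theorem as a quoted result of Khaledi et al.\ (2011), but its own proof of the weak-majorization extension (Theorem \ref{th02}) runs the same way, establishing Schur-convexity of $\lambda \mapsto \sum_i \lambda_i r(\lambda_i t)$ by reducing it to convexity of $\psi(u)=u\,r(u)$ via the identity $[u^{2}r'(u)]'=u\,[r(u)+u\,r'(u)]'=u\,\psi''(u)$. Your use of the Schur--Ostrowski criterion in place of the paper's appeal to Proposition C.1 of Marshall et al.\ is an immaterial difference, since for an additively separable symmetric function both reduce to monotonicity of the same derivative $r(u)+u\,r'(u)$.
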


In the next theorem we extend the above result to the case when the two sets
of scale parameters weakly majorize each other instead of usual majorization.

\begin{theorem}
\label{th02}Let $X_{\lambda _{1}},X_{\lambda _{2}},\ldots ,X_{\lambda _{n}}$
be independent random variables with $X_{\lambda _{i}}\sim F(\lambda _{i}t)$
where $\lambda _{i}>0$, $i=1,\ldots ,n$. If $tr(t)$ is decreasing in $t$ and
$t^{2}r^{\prime }(t)$ is increasing in $t$, then
\begin{equation*}
\left( \lambda _{1},\ldots ,\lambda _{n}\right) \overset{w}{\leq}\left( \theta
_{1},\ldots ,\theta _{n}\right) \Rightarrow X_{n:n}^{\lambda }\leq
_{rh}X_{n:n}^{\theta }.
\end{equation*}
\end{theorem}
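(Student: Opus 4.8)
The plan is to reduce the weak-majorization statement to the ordinary-majorization statement of Theorem~\ref{rhmto} by a monotonicity-in-the-parameters argument. The key observation is that weak majorization ${\bm\lambda}\overset{w}{\leq}{\bm\theta}$ means ${\bm\lambda}$ is supermajorized by ${\bm\theta}$, and a standard fact from the theory of majorization (see \cite{Marshall:2011}) is that if ${\bm\lambda}\overset{w}{\leq}{\bm\theta}$, then there exists a vector ${\bm\mu}$ with ${\bm\lambda}\leq{\bm\mu}$ componentwise (after suitable reordering, ${\bm\lambda}\geq{\bm\mu}$ in each coordinate is the version matching supermajorization --- precisely, one can find ${\bm\mu}$ with $\mu_i\leq\lambda_i$ is \emph{not} what we want; rather, since the partial sums of the smallest entries of ${\bm\lambda}$ dominate those of ${\bm\theta}$, one has ${\bm\lambda}\geq{\bm\mu}$ coordinatewise for some ${\bm\mu}\overset{m}{=}$-comparable to ${\bm\theta}$). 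Concretely: there is ${\bm\mu}$ such that ${\bm\mu}\overset{m}{\leq}{\bm\theta}$ and $\mu_i\geq\lambda_i$ for all $i$ (up to a common permutation). Then Theorem~\ref{rhmto}, whose hypothesis $t^2r'(t)$ increasing is included among our assumptions, gives $X_{n:n}^{\mu}\leq_{rh}X_{n:n}^{\theta}$, and it remains only to show the ``one-sided'' statement $\lambda_i\leq\mu_i$ for all $i$ $\Rightarrow X_{n:n}^{\lambda}\leq_{rh}X_{n:n}^{\mu}$, for which the extra hypothesis that $tr(t)$ is decreasing will be used.

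First I would establish that componentwise domination of the scale parameters implies the reverse hazard rate ordering of the parallel systems under the assumption that $tr(t)$ is decreasing. Using the formula \eqref{ecrh}, it suffices to show that $r_{n:n}^{\lambda}(t)\leq r_{n:n}^{\mu}(t)$ for every $t$ whenever $\lambda_i\leq\mu_i$ for each $i$; and for this it is enough to treat one coordinate at a time and show that $\lambda\mapsto\lambda r(\lambda t)$ is increasing in $\lambda>0$ for each fixed $t>0$. Writing $s=\lambda t$, this is exactly the statement that $s\mapsto s\,r(s)$ is increasing in $s$, i.e. $tr(t)$ is increasing --- but our hypothesis is that $tr(t)$ is \emph{decreasing}. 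Hence I would instead use the relation between reverse hazard rate order and the direction of scaling: if $tr(t)$ is decreasing then a smaller scale parameter makes the reverse hazard rate \emph{larger}, so $\lambda_i\leq\mu_i$ componentwise actually yields $X_{n:n}^{\mu}\leq_{rh}X_{n:n}^{\lambda}$; to make the chain go the right way I should take ${\bm\mu}$ with $\mu_i\leq\lambda_i$ componentwise and ${\bm\mu}\overset{m}{\leq}{\bm\theta}$. That such a ${\bm\mu}$ exists when ${\bm\lambda}\overset{w}{\leq}{\bm\theta}$ (supermajorization, with partial sums of the \emph{smallest} components) is exactly the classical lemma (Marshall--Olkin--Arnold, 5.A.9 type): supermajorization ${\bm\lambda}\overset{w}{\leq}{\bm\theta}$ is equivalent to the existence of ${\bm\mu}$ with ${\bm\mu}\leq{\bm\lambda}$ coordinatewise and ${\bm\mu}\overset{m}{\leq}{\bm\theta}$.

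Assembling the two pieces: given ${\bm\lambda}\overset{w}{\leq}{\bm\theta}$, pick ${\bm\mu}$ with ${\bm\mu}\leq{\bm\lambda}$ componentwise and ${\bm\mu}\overset{m}{\leq}{\bm\theta}$. By Theorem~\ref{rhmto} (its hypothesis $t^2r'(t)$ increasing being one of ours), $X_{n:n}^{\mu}\leq_{rh}X_{n:n}^{\theta}$. By the componentwise step, using that $tr(t)$ is decreasing, ${\bm\mu}\leq{\bm\lambda}$ gives $r_{n:n}^{\lambda}(t)=\sum_i\lambda_i r(\lambda_i t)\leq\sum_i\mu_i r(\mu_i t)=r_{n:n}^{\mu}(t)$, hence $X_{n:n}^{\lambda}\leq_{rh}X_{n:n}^{\mu}$. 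Transitivity of $\leq_{rh}$ then gives $X_{n:n}^{\lambda}\leq_{rh}X_{n:n}^{\theta}$, completing the proof.

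The main obstacle I anticipate is getting the \emph{directions} consistent: which of ${\bm\lambda}$ and ${\bm\theta}$ gets the intermediate vector ${\bm\mu}$ sandwiched against it, and whether $tr(t)$ decreasing makes the reverse hazard rate of the parallel system increase or decrease as scale parameters shrink. One must check carefully that $\lambda\mapsto\lambda r(\lambda t)$ is decreasing in $\lambda$ precisely when $s\mapsto sr(s)$ is decreasing (immediate by substitution), so that shrinking all $\lambda_i$ \emph{raises} $r_{n:n}$, i.e. $\leq_{rh}$-enlarges the parallel system --- meaning we want ${\bm\mu}$ \emph{below} ${\bm\lambda}$ and majorized by ${\bm\theta}$, which is exactly what supermajorization (majorization from above, with partial sums of smallest entries) supplies. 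A secondary check is that the decomposition lemma for supermajorization is being applied in the form that matches Definition~\ref{def:01}'s convention (partial sums of the $x_{(i)}$, the \emph{increasing} arrangement); this is the version in \cite{Marshall:2011} and requires only that the function of the parameters be increasing in each argument, which our $tr(t)$-monotonicity guarantees.
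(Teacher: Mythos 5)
Your argument is correct, and it reaches the conclusion by a genuinely different assembly than the paper's. The paper fixes $t$ and applies Theorem A.8 of \cite{Marshall:2011} directly to the map $\left(\lambda_1,\ldots,\lambda_n\right)\mapsto r_{n:n}^{\lambda}(t)$, verifying that it is decreasing in each coordinate (from $tr(t)$ decreasing) and Schur-convex (from convexity of $\psi(t)=tr(t)$, which it derives from $t^{2}r'(t)$ increasing via Proposition C.1 of \cite{Marshall:2011}). You instead invoke the vector characterization of weak supermajorization --- ${\bm\lambda}\overset{w}{\leq}{\bm\theta}$ iff there is ${\bm\mu}$ with ${\bm\mu}\leq{\bm\lambda}$ componentwise and ${\bm\mu}\overset{m}{\leq}{\bm\theta}$ --- and then chain $X_{n:n}^{\lambda}\leq_{rh}X_{n:n}^{\mu}\leq_{rh}X_{n:n}^{\theta}$, the first link from the componentwise monotonicity of $\lambda\mapsto\lambda r(\lambda t)=\psi(\lambda t)/t$ and the second from Theorem \ref{rhmto} quoted as a black box. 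The two routes rest on the same two monotonicity facts (the decomposition characterization is essentially how Theorem A.8 is proved), but yours is more modular: it never needs to re-establish Schur-convexity or the convexity of $\psi$, since that work is absorbed into Theorem \ref{rhmto}. Two small points to tighten: (i) you should note that the intermediate vector ${\bm\mu}$ has strictly positive components, which follows from ${\bm\mu}\overset{m}{\leq}{\bm\theta}$ and the $j=1$ partial-sum inequality, $\mu_{(1)}\geq\theta_{(1)}>0$, so that Theorem \ref{rhmto} and the scale model apply to ${\bm\mu}$; (ii) the opening paragraph flip-flops on the direction of the sandwich before settling on the right one --- in a final write-up state the decomposition once, in the correct form, rather than recording the self-correction.
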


\begin{proof}
Fix $t>0$. Then the reverse hazard rate of $X_{n:n}^{\lambda }$ as given by
(\ref{ecrh}) can be rewritten as%
\begin{equation*}
r_{n:n}^{\lambda }(t)=\sum_{i=1}^{n}\lambda _{i}r(\lambda _{i}t)=\frac{1}{t}%
\sum_{i=1}^{n}\psi (\lambda _{i}t),
\end{equation*}%
where $\psi (t)=tr(t)$, $t\geq 0$.
From Theorem A.8 of \cite{Marshall:2011} (p. 59) it suffices to show that,
for each $t>0$, $r_{n:n}^{\lambda }(t)$ is decreasing in each $\lambda _{i}$%
, $i=1,\ldots ,n$, and is a Schur-convex function of $\left( \lambda
_{1},\ldots ,\lambda _{n}\right) $. By the assumptions, $tr(t)$ is
decreasing in $t$, then the reverse hazard rate function of $X_{n:n}$ is
decreasing in each $\lambda _{i}$.

Now, from Proposition C.1 of \cite{Marshall:2011} (p. 64), the convexity of $%
\psi (t)$ is needed to prove Schur-convexity of $r_{n:n}^{\lambda }(t)$.
Note that the assumption $t^{2}r^{\prime }(t)$ is increasing in $t$ is
equivalent to $r(t)+tr^{\prime }(t)$ is increasing in $t$ since%
\begin{equation*}
\left[ t^{2}r^{\prime }(t)\right] ^{\prime }=t\left( 2r^{\prime
}(t)+tr^{\prime \prime }(t)\right) =t\left[ r(t)+tr^{\prime }(t)\right]
^{\prime },
\end{equation*}%
and $r(t)+tr^{\prime }(t)$ is increasing in $t$ is equivalent to $tr(t)$ is
convex since%
\begin{equation*}
\left[ tr(t)\right] ^{\prime }=r(t)+tr^{\prime }(t)\text{.}
\end{equation*}%
Hence, $\psi (t)$ is convex.
\end{proof}

Note that the conditions of Theorem \ref{th02} are satisfied by the
generalized gamma distribution with parameters $\beta \leq 1$ and $\alpha >0$
as \cite{Khaledi:2011} proved that $t^{2}r^{\prime }(t)$ is an increasing
function for $X\sim GG(\beta ,\alpha )$, when $\beta \leq 1$ and $\alpha >0$%
. It is easy to verify that $tr(t)$ is a decreasing function of $t$ when $%
\beta ,\alpha>0$.

As one natural application, Theorem \ref{th02} guarantees that, for parallel
systems of components having independent generalized gamma distributed lifetimes with
parameters $\beta \leq 1$ and $\alpha >0$, the weakly majorized scale parameter vector leads to a larger system's lifetime in the sense of the reverse hazard rate order.

The generalized gamma distribution includes many important distributions
like exponential ($\beta =\alpha =1$), Weibull ($\beta =\alpha $) and gamma (%
$\beta =1$) as special cases. \cite{Misra:2013} proved that in the
case of gamma distribution with density function%
\begin{equation*}
f(t)=\frac{\lambda _{i}^{\alpha }}{\Gamma (\alpha )}t^{\alpha -1}e^{-\lambda
_{i}t},\text{ }t>0,
\end{equation*}%
when $\alpha >0$ and $n\geq 2$,
\begin{equation}
\left( \lambda _{1},\ldots ,\lambda _{n}\right) \overset{w}{\leq}\left( \theta
_{1},\ldots ,\theta _{n}\right) \Rightarrow X_{n:n}^{\lambda }\leq
_{rh}X_{n:n}^{\theta }.  \label{ec:misra}
\end{equation}%
Note that, when $0<\alpha \leq 1$, (\ref{ec:misra}) can be seen as a
particular case of Theorem \ref{th02} since gamma distribution is a
particular case of generalized gamma distribution when $\beta =1$.

Recently, \cite{Torrado:2015} established, in Theorem 4.1, the
reverse hazard rate ordering between parallel systems based on two sets of heterogeneous
Weibull random variables with a common shape parameter and with scale parameters which
are ordered according to a majorization order
when the common shape parameter $\alpha$ satisfies $0 < \alpha \leq 1$.
So Theorem 4.1 in \cite{Torrado:2015} can be seen as a
particular case of Theorem \ref{th02} because
Weibull distribution is a
particular case of generalized gamma distribution when $\beta =\alpha$.

\bigskip


\section{Likelihood ratio ordering results}

\label{sec:like}

In this section, we investigate whether the result of Theorem \ref{th02} can
be strengthened from reverse hazard rate ordering to likelihood ratio
ordering. First, we consider the case when $n=2$ and the scale parameters of
the scale model are ordered according to a weekly majorization order.

\begin{theorem}
\label{th01}Let $X_{\lambda _{1}},X_{\lambda }$ be independent nonnegative
random variables with $X_{\lambda _{1}}\sim F(\lambda _{1}t)$ and $%
X_{\lambda }\sim F(\lambda t)$, where $\lambda _{1},\lambda >0$ and $F$ is
an absolutely continuous distribution. Let $Y_{\lambda _{1}^{\ast
}},Y_{\lambda }$ be independent nonnegative random variables with $%
Y_{\lambda _{1}^{\ast }}\sim F(\lambda _{1}^{\ast }t)$ and $Y_{\lambda }\sim
F(\lambda t)$, where $\lambda _{1}^{\ast },\lambda >0$. Let $r$ be the
reverse hazard rate function of $F$. Assume $tr(t)$ and $tr^{\prime }(t)/r(t)
$ are both decreasing in $t$. Suppose $\lambda _{1}^{\ast }=\min (\lambda
,\lambda _{1},\lambda _{1}^{\ast })$, then%
\begin{equation*}
\left( \lambda _{1},\lambda \right) \overset{w}{\leq}\left( \lambda _{1}^{\ast
},\lambda \right) \Rightarrow \frac{r_{2:2}^{\ast }(t)}{r_{2:2}(t)}\text{ is
increasing in }t.
\end{equation*}
\end{theorem}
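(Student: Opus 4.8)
The plan is to pass to logarithmic derivatives. Since $r_{2:2}(t)$ and $r_{2:2}^{\ast}(t)$ are strictly positive, $r_{2:2}^{\ast}(t)/r_{2:2}(t)$ is increasing in $t$ if and only if $(r_{2:2}^{\ast})'(t)/r_{2:2}^{\ast}(t)\ge (r_{2:2})'(t)/r_{2:2}(t)$ for every $t>0$. Using $r_{2:2}(t)=\lambda_{1}r(\lambda_{1}t)+\lambda r(\lambda t)$ and $r_{2:2}^{\ast}(t)=\lambda_{1}^{\ast}r(\lambda_{1}^{\ast}t)+\lambda r(\lambda t)$, I would introduce the abbreviations $\psi(x)=xr(x)$ and $\rho(x)=xr'(x)/r(x)$, which by hypothesis are both decreasing on $(0,\infty)$. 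Since $(r_{2:2})'(t)=\lambda_{1}^{2}r'(\lambda_{1}t)+\lambda^{2}r'(\lambda t)$ and $\lambda_{1}^{2}r'(\lambda_{1}t)=t^{-1}\rho(\lambda_{1}t)\,\lambda_{1}r(\lambda_{1}t)$, the quantity $t\,(r_{2:2})'(t)/r_{2:2}(t)$ is exactly the weighted average of $\rho(\lambda_{1}t)$ and $\rho(\lambda t)$ with positive weights $\lambda_{1}r(\lambda_{1}t)$ and $\lambda r(\lambda t)$; the same rewriting applies to the starred system.

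Next I would cash in the hypotheses. Since $\lambda_{1}^{\ast}=\min(\lambda,\lambda_{1},\lambda_{1}^{\ast})$ (the weak majorization $(\lambda_{1},\lambda)\overset{w}{\leq}(\lambda_{1}^{\ast},\lambda)$ is automatically satisfied once this minimum is fixed, so for $n=2$ it adds nothing), we have $\lambda_{1}^{\ast}\le\lambda_{1}$ and $\lambda_{1}^{\ast}\le\lambda$. Monotonicity of $\psi$ then yields $\lambda_{1}^{\ast}r(\lambda_{1}^{\ast}t)\ge\lambda_{1}r(\lambda_{1}t)$, and monotonicity of $\rho$ yields $\rho(\lambda_{1}^{\ast}t)\ge\rho(\lambda_{1}t)$ and $\rho(\lambda_{1}^{\ast}t)\ge\rho(\lambda t)$. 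Writing $p=\lambda_{1}^{\ast}r(\lambda_{1}^{\ast}t)$, $q=\lambda_{1}r(\lambda_{1}t)$, $s=\lambda r(\lambda t)$, $A=\rho(\lambda_{1}^{\ast}t)$, $B=\rho(\lambda_{1}t)$, $C=\rho(\lambda t)$, the target inequality reduces (after multiplying through by $t>0$) to
\begin{equation*}
\frac{pA+sC}{p+s}\ \geq\ \frac{qB+sC}{q+s},\qquad\text{where }p\ge q>0,\ s>0,\ A\ge B,\ A\ge C.
\end{equation*}

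To finish, set $\alpha=p/(p+s)$ and $\beta=q/(q+s)$; then $\alpha\ge\beta>0$ because $x\mapsto x/(x+s)$ is increasing. The displayed inequality is $\alpha A+(1-\alpha)C\ge\beta B+(1-\beta)C$, i.e. $\alpha(A-C)-\beta(B-C)\ge0$, and this follows from the chain
$\alpha(A-C)-\beta(B-C)\ge\beta(A-C)-\beta(B-C)=\beta(A-B)\ge0$,
where the first step uses $\alpha\ge\beta$ together with $A\ge C$, and the last uses $A\ge B$ and $\beta\ge0$. The only points that need care are in the reduction of the second paragraph: one should check that $p,q,s$ are strictly positive (so that the weighted-average rewriting and the ratio itself are legitimate), and one should resist splitting into the cases $\lambda_{1}\le\lambda$ and $\lambda_{1}>\lambda$, since the chain above handles both uniformly. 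I do not anticipate any other obstacle.
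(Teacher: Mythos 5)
Your proof is correct, and at its core it is the same computation as the paper's: after clearing denominators, your target inequality $(pA+sC)(q+s)\ge (qB+sC)(p+s)$ expands to $pq(A-B)+qs(C-B)+ps(A-C)\ge 0$, which is exactly the three-term expression the paper obtains for the sign of $\bigl(r_{2:2}^{\ast}/r_{2:2}\bigr)'$ in terms of $\psi(t)=tr(t)$ and $\eta(t)=-tr'(t)/r(t)$. Where you differ is in how the inequality is verified: the paper splits into the two cases $\lambda_{1}^{\ast}\le\lambda\le\lambda_{1}$ and $\lambda_{1}^{\ast}\le\lambda_{1}\le\lambda$ (in the first all three terms are nonnegative; in the second it uses the monotonicity of $\psi$ to regroup), whereas your reformulation of $t\,r_{2:2}'/r_{2:2}$ as a weighted average of $\rho(\lambda_{1}t)$ and $\rho(\lambda t)$, followed by the convex-combination estimate $\alpha(A-C)-\beta(B-C)\ge\beta(A-B)\ge 0$, handles both orderings of $\lambda_{1}$ and $\lambda$ uniformly. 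That is a genuine, if modest, streamlining; it also makes transparent that only $A\ge B$, $A\ge C$ and $p\ge q$ are used, i.e.\ only $\lambda_{1}^{\ast}\le\lambda_{1}$ and $\lambda_{1}^{\ast}\le\lambda$, which justifies your side remark that for $n=2$ the weak majorization hypothesis is implied by the minimality of $\lambda_{1}^{\ast}$. Your caveat about strict positivity of $p,q,s$ is the same implicit restriction the paper makes (the ratio is considered where the reverse hazard rates are positive), so there is no gap.
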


\begin{proof}
Let
\begin{equation*}
\phi (t)=\frac{r_{2:2}^{\ast }(t)}{r_{2:2}(t)}=\frac{\lambda _{1}^{\ast
}r(\lambda _{1}^{\ast }t)+\lambda r(\lambda t)}{\lambda _{1}r(\lambda
_{1}t)+\lambda r(\lambda t)}\text{,}
\end{equation*}%
and its derivative for $t > 0$ is,
\begin{eqnarray*}
\phi ^{\prime }(t) &\overset{\text{sign}}{=} &
t^{3}\left( \left( \lambda _{1}^{\ast
}\right) ^{2}r^{\prime }(\lambda _{1}^{\ast }t)+\lambda ^{2}r^{\prime
}(\lambda t)\right) \left( \lambda _{1}r(\lambda _{1}t)+\lambda r(\lambda
t)\right) \\
&&-t^{3}\left( \lambda _{1}^{\ast }r(\lambda _{1}^{\ast }t)+\lambda
r(\lambda t)\right) \left( \lambda _{1}^{2}r^{\prime }(\lambda
_{1}t)+\lambda ^{2}r^{\prime }(\lambda t)\right) \\
&=&\lambda _{1}\lambda _{1}^{\ast }t^{3}\left( \lambda _{1}^{\ast }r^{\prime
}(\lambda _{1}^{\ast }t)r(\lambda _{1}t)-\lambda _{1}r(\lambda _{1}^{\ast
}t)r^{\prime }(\lambda _{1}t)\right) \\
&&+\lambda _{1}\lambda t^{3}\left( \lambda r^{\prime }(\lambda t)r(\lambda
_{1}t)-\lambda _{1}r(\lambda t)r^{\prime }(\lambda _{1}t)\right) \\
&&+\lambda \lambda _{1}^{\ast }t^{3}\left( \lambda _{1}^{\ast }r^{\prime
}(\lambda _{1}^{\ast }t)r(\lambda t)-\lambda r(\lambda _{1}^{\ast
}t)r^{\prime }(\lambda t)\right) \\
&=&\lambda _{1}\lambda _{1}^{\ast }t^{2}r(\lambda _{1}^{\ast }t)r(\lambda
_{1}t)\left( \lambda _{1}^{\ast }t\frac{r^{\prime }(\lambda _{1}^{\ast }t)}{%
r(\lambda _{1}^{\ast }t)}-\lambda _{1}t\frac{r^{\prime }(\lambda _{1}t)}{%
r(\lambda _{1}t)}\right) \\
&&+\lambda _{1}\lambda t^{2}r(\lambda t)r(\lambda _{1}t)\left( \lambda t%
\frac{r^{\prime }(\lambda t)}{r(\lambda t)}-\lambda _{1}t\frac{r^{\prime
}(\lambda _{1}t)}{r(\lambda _{1}t)}\right) \\
&&+\lambda \lambda _{1}^{\ast }t^{2}r(\lambda _{1}^{\ast }t)r(\lambda
t)\left( \lambda _{1}^{\ast }t\frac{r^{\prime }(\lambda _{1}^{\ast }t)}{%
r(\lambda _{1}^{\ast }t)}-\lambda t\frac{r^{\prime }(\lambda t)}{r(\lambda t)%
}\right) \\
&=&\psi (\lambda _{1}^{\ast }t)\psi (\lambda _{1}t)\left( -\eta (\lambda
_{1}^{\ast }t)+\eta (\lambda _{1}t)\right) +\psi (\lambda t)\psi (\lambda
_{1}t)\left( -\eta (\lambda t)+\eta (\lambda _{1}t)\right) \\
&&+\psi (\lambda _{1}^{\ast }t)\psi (\lambda t)\left( -\eta (\lambda
_{1}^{\ast }t)+\eta (\lambda t)\right) \text{,}
\end{eqnarray*}%
where
\begin{equation*}
\psi \left( t\right) =tr(t)\text{ and }\eta \left( t\right) =-t\frac{%
r^{\prime }(t)}{r(t)}.
\end{equation*}%
Note that $\psi \left( t\right) \geq 0$ for all $t\geq 0$ and $\eta \left(
t\right) \geq 0$ since $r^{\prime }(t)\leq 0$ because $tr(t)$ is a
decreasing function. By the assumptions, we know that $\psi \left( t\right) $
is decreasing and $\eta \left( t\right) $ is increasing in $t$. If $\lambda
_{1}^{\ast }=\min (\lambda ,\lambda _{1},\lambda _{1}^{\ast })$ and $\left(
\lambda _{1},\lambda \right) \overset{w}{\leq}
\left( \lambda _{1}^{\ast },\lambda
\right) $, then $\lambda _{1}^{\ast }\leq \lambda \leq \lambda _{1}$ or $%
\lambda _{1}^{\ast }\leq \lambda _{1}\leq \lambda $. When $\lambda
_{1}^{\ast }\leq \lambda \leq \lambda _{1}$, we have%
\begin{eqnarray*}
\phi ^{\prime }(t) &\overset{\text{sign}}{=} &
\psi (\lambda _{1}^{\ast }t)\psi
(\lambda _{1}t)\left( -\eta (\lambda _{1}^{\ast }t)+\eta (\lambda
_{1}t)\right) +\psi (\lambda t)\psi (\lambda _{1}t)\left( -\eta (\lambda
t)+\eta (\lambda _{1}t)\right) \\
&&+\psi (\lambda _{1}^{\ast }t)\psi (\lambda t)\left( -\eta (\lambda
_{1}^{\ast }t)+\eta (\lambda t)\right) \\
&\geq &0\text{,}
\end{eqnarray*}%
since $\eta \left( \lambda _{1}^{\ast }t\right) \leq \eta \left( \lambda
t\right) \leq \eta \left( \lambda _{1}t\right) $. When $\lambda _{1}^{\ast
}\leq \lambda _{1}\leq \lambda $, we get
\begin{eqnarray*}
\phi ^{\prime }(t) &\geq &\psi (\lambda t)\psi (\lambda _{1}t)\left( -\eta
(\lambda _{1}^{\ast }t)+\eta (\lambda _{1}t)\right) +\psi (\lambda t)\psi
(\lambda _{1}t)\left( -\eta (\lambda t)+\eta (\lambda _{1}t)\right) \\
&&+\psi (\lambda _{1}t)\psi (\lambda t)\left( -\eta (\lambda _{1}^{\ast
}t)+\eta (\lambda t)\right) \\
&=&2\psi (\lambda t)\psi (\lambda _{1}t)\left( -\eta (\lambda _{1}^{\ast
}t)+\eta (\lambda _{1}t)\right) \geq 0\text{.}
\end{eqnarray*}%
Therefore $r_{2:2}^{\ast }(t)/r_{2:2}(t)$ is increasing in $t$.
\end{proof}


In the next result, we extend Theorem \ref{th02} from reverse hazard rate ordering to likelihood ratio
ordering for $n=2$.


\begin{theorem}
\label{th05}Let $X_{\lambda _{1}},X_{\lambda }$ be independent nonnegative
random variables with $X_{\lambda _{1}}\sim F(\lambda _{1}t)$ and $%
X_{\lambda }\sim F(\lambda t)$, where $\lambda _{1},\lambda >0$ and $F$ is
an absolutely continuous distribution. Let $r$ be the reverse hazard rate
function of $F$. Let $Y_{\lambda _{1}^{\ast }},Y_{\lambda }$ be independent
nonnegative random variables with $Y_{\lambda _{1}^{\ast }}\sim F(\lambda
_{1}^{\ast }t)$ and $Y_{\lambda }\sim F(\lambda t)$, where $\lambda
_{1}^{\ast },\lambda >0$. Assume $tr(t)$ and $tr^{\prime }(t)/r(t)$ are both
decreasing in $t$ and $t^{2}r^{\prime }(t)$ is increasing in $t$. Suppose $%
\lambda _{1}^{\ast }=\min (\lambda ,\lambda _{1},\lambda _{1}^{\ast })$, then%
\begin{equation*}
\left( \lambda _{1},\lambda \right) \overset{w}{\leq}\left( \lambda _{1}^{\ast
},\lambda \right) \Rightarrow X_{2:2}\leq _{lr}Y_{2:2}.
\end{equation*}
\end{theorem}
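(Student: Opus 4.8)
The plan is to reduce the likelihood ratio comparison to the two monotonicity facts already established in Theorems~\ref{th01} and~\ref{th02}, via the elementary factorization of the density of a parallel system as (distribution function) $\times$ (reverse hazard rate).

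First I would write, for $t>0$,
\[
F_{2:2}(t)=F(\lambda_1 t)F(\lambda t),\qquad G_{2:2}(t)=F(\lambda_1^{\ast} t)F(\lambda t),
\]
so that the densities are $f_{2:2}(t)=F_{2:2}(t)\,r_{2:2}(t)$ and $g_{2:2}(t)=G_{2:2}(t)\,r_{2:2}^{\ast}(t)$, where $r_{2:2}$ and $r_{2:2}^{\ast}$ are the reverse hazard rates given by (\ref{ecrh}) with $n=2$. Hence
\[
\frac{g_{2:2}(t)}{f_{2:2}(t)}=\frac{G_{2:2}(t)}{F_{2:2}(t)}\cdot\frac{r_{2:2}^{\ast}(t)}{r_{2:2}(t)}.
\]
The proof then amounts to showing that both factors on the right are nonnegative and increasing in $t$, since a product of two nonnegative increasing functions is increasing.

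For the second factor, observe that the hypotheses ``$tr(t)$ decreasing'' and ``$tr^{\prime}(t)/r(t)$ decreasing'', together with $\lambda_1^{\ast}=\min(\lambda,\lambda_1,\lambda_1^{\ast})$ and $(\lambda_1,\lambda)\overset{w}{\leq}(\lambda_1^{\ast},\lambda)$, are exactly the assumptions of Theorem~\ref{th01}; that theorem gives that $r_{2:2}^{\ast}(t)/r_{2:2}(t)$ is increasing, and it is nonnegative because a reverse hazard rate is nonnegative. For the first factor, I would apply Theorem~\ref{th02} with $n=2$ and parameter vectors $(\lambda_1,\lambda)$ and $(\lambda_1^{\ast},\lambda)$: its hypotheses ``$tr(t)$ decreasing'' and ``$t^{2}r^{\prime}(t)$ increasing'' are assumed here and the weak majorization $(\lambda_1,\lambda)\overset{w}{\leq}(\lambda_1^{\ast},\lambda)$ holds, so $X_{2:2}\leq_{rh}Y_{2:2}$; by the definition of the reverse hazard rate order this says precisely that $G_{2:2}(t)/F_{2:2}(t)$ is increasing in $t$, and it is clearly nonnegative.

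Combining the two, $g_{2:2}(t)/f_{2:2}(t)$ is increasing in $t$, i.e.\ $X_{2:2}\leq_{lr}Y_{2:2}$. There is no genuine analytic obstacle beyond Theorems~\ref{th01} and~\ref{th02} themselves; the only point requiring care is the bookkeeping that the present hypotheses simultaneously imply the hypotheses of both earlier theorems, together with the observation that the reverse hazard rate order is exactly the monotonicity of the distribution function ratio needed to close the argument. One could alternatively differentiate $g_{2:2}/f_{2:2}$ directly, but that merely reproduces the same two ingredients in a less transparent form.
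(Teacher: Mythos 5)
Your proof is correct and follows essentially the same route as the paper: the paper likewise invokes Theorem~\ref{th01} for the monotonicity of $r_{2:2}^{\ast}/r_{2:2}$ and Theorem~\ref{th02} for $X_{2:2}\leq_{rh}Y_{2:2}$, and then closes the argument by citing Theorem 1.C.4 of Shaked and Shanthikumar (2007). The only difference is that you prove that final implication inline via the factorization $f_{2:2}=F_{2:2}\,r_{2:2}$ and the product of two nonnegative increasing functions, which is exactly the content of the cited result.
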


\begin{proof}
From Theorem \ref{th01}, we know that $r_{2:2}^{\ast }(t)/r_{2:2}(t)$ is
increasing in $t$ under the given assumptions. By Theorem \ref{th02}, $%
\left( \lambda _{1},\lambda \right) \overset{w}{\leq}\left( \lambda _{1}^{\ast
},\lambda \right) $ implies $X_{2:2}\leq _{rh}Y_{2:2}$. Thus the required
result follows from Theorem 1.C.4 of \cite{Shaked:2007}.
\end{proof}

The conditions of Theorem \ref{th05} hold when the baseline distribution in
the scale model is $GG(\beta ,\alpha )$ with parameters $\alpha \leq \beta
\leq 1$. We know from \cite{Khaledi:2011} that for $\alpha ,\beta >0$, the
function $tr(t)$ is decreasing in $t$ and for $\beta \leq 1$ and $\alpha >0$%
, the function $t^{2}r^{\prime }(t)$ is increasing in $t$. In Lemma \ref%
{lem01}, we show that the function $tr^{\prime }(t)/r(t)$ is decreasing in $t
$ when $\alpha \leq \beta $.

\begin{lemma}
\label{lem01} Let $X\sim GG(\beta ,\alpha )$, $\alpha \leq \beta $, with
reverse hazard rate $r(t)$, then $tr^{\prime }(t)/r(t)$ is a decreasing
function.
\end{lemma}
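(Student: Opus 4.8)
The plan is to obtain a closed form for $tr'(t)/r(t)$ when $F$ is the c.d.f.\ of $GG(\beta,\alpha)$, differentiate it, and reduce the claim to an elementary inequality. First I would use that the density of $X\sim GG(\beta,\alpha)$ is $f(t)=\frac{\beta}{\Gamma(\alpha/\beta)}t^{\alpha-1}e^{-t^{\beta}}$, so that $(\log f)'(t)=\frac{\alpha-1}{t}-\beta t^{\beta-1}$. Since $r=f/F$ and $(\log F)'=f/F=r$, we have $r'/r=(\log f)'-r$, and multiplying by $t$,
\[
\frac{tr'(t)}{r(t)}=(\alpha-1)-\beta t^{\beta}-tr(t).
\]
Writing $v(t):=tr(t)$, this also records the identity $tr'(t)=r(t)\bigl[(\alpha-1)-\beta t^{\beta}-v(t)\bigr]$.

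Next I would differentiate the displayed identity in $t$ and substitute the expression just obtained for $tr'(t)$; after multiplying through by $t>0$, the desired inequality $\frac{d}{dt}\bigl[tr'(t)/r(t)\bigr]\le0$ reduces to
\[
\beta t^{\beta}\bigl(\beta-v(t)\bigr)+v(t)\bigl(\alpha-v(t)\bigr)\ge0 .
\]
It then remains only to control $v(t)=tr(t)$. Clearly $v(t)>0$, and $v(t)\le\alpha$ for all $t>0$: this follows, for instance, by integrating $F(t)=\int_0^t f(s)\,ds$ by parts to get $\alpha F(t)\ge tf(t)$, hence $v(t)=tf(t)/F(t)\le\alpha$; alternatively one may use that $tr(t)$ is decreasing (known from \cite{Khaledi:2011}) together with $\lim_{t\to0^+}tr(t)=\alpha$. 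Invoking now the hypothesis $\alpha\le\beta$, we obtain $0<v(t)\le\alpha\le\beta$, so that both $\beta-v(t)\ge0$ and $\alpha-v(t)\ge0$ and the displayed inequality holds term by term; hence $tr'(t)/r(t)$ is decreasing.

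The computation is short once the closed form for $tr'(t)/r(t)$ is in hand, and the only step deserving a separate argument is the uniform bound $tr(t)\le\alpha$. The hypothesis $\alpha\le\beta$ enters at exactly one point: it guarantees $v(t)\le\beta$, which makes the first summand of the reduced inequality nonnegative; without it one would have to weigh the possibly negative term $\beta t^{\beta}(\beta-v(t))$ against $v(t)(\alpha-v(t))$, and this is where the argument would need more care.
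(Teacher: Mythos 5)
Your proof is correct and follows essentially the same route as the paper's: the same closed-form identity $tr'(t)/r(t)=\alpha-1-\beta t^{\beta}-tr(t)$, differentiation, the bound $tr(t)\le\alpha$, and the hypothesis $\alpha\le\beta$ entering at the end. Your arrangement is marginally cleaner — substituting the identity back into itself yields the exact decomposition $t\,\frac{d}{dt}\bigl[tr'(t)/r(t)\bigr]=-\beta t^{\beta}\bigl(\beta-v(t)\bigr)-v(t)\bigl(\alpha-v(t)\bigr)$ rather than the paper's chain of intermediate inequalities, and your integration-by-parts derivation of $tr(t)\le\alpha$ makes that step self-contained instead of cited — but these are presentational refinements, not a different argument.
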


\begin{proof}
The reverse hazard rate of $GG(\beta ,\alpha )$ is
\begin{equation*}
r(t)=\frac{t^{\alpha -1}e^{-t^{\beta }}}{\int_{0}^{t}x^{\alpha
-1}e^{-x^{\beta }}dx}\text{.}
\end{equation*}%
From (A.21) in \cite{Khaledi:2011}, we know%
\begin{equation}
t\frac{r^{\prime }(t)}{r(t)}=\alpha -1-\beta t^{\beta }-tr(t)\text{.}
\label{ec01}
\end{equation}%
Differentiating with respect to $t$, we get%
\begin{equation*}
\left[ t\frac{r^{\prime }(t)}{r(t)}\right] ^{\prime }=-\beta ^{2}t^{\beta
-1}-r(t)-tr^{\prime }(t)\text{.}
\end{equation*}%
Note that, in general, the derivative of any reverse hazard rate with
respect to $t$ is%
\begin{equation}
r^{\prime }(t)=\frac{f^{\prime }(t)}{F(t)}-r^{2}(t)\text{.}  \label{ec02}
\end{equation}%
Combining these observations, we have
\begin{eqnarray*}
\left[ t\frac{r^{\prime }(t)}{r(t)}\right] ^{\prime } &=&-\beta ^{2}t^{\beta
-1}-r(t)-t\frac{f^{\prime }(t)}{F(t)}+tr^{2}(t) \\
&=&-\beta ^{2}t^{\beta -1}+r(t)\left( tr(t)-1-t\frac{f^{\prime }(t)}{f(t)}%
\right) \text{.}
\end{eqnarray*}%
From \cite{Khaledi:2011}, we know that $tr(t)$ is a decreasing function for $%
\beta ,\alpha >0$ and also that $\lim_{t\rightarrow 0}tr(t)=\alpha $ and $%
\lim_{t\rightarrow \infty }tr(t)=0$, then $tr(t)\leq \alpha $ for all $t>0$.
Then%
\begin{equation}
\left[ t\frac{r^{\prime }(t)}{r(t)}\right] ^{\prime }\leq -\beta
^{2}t^{\beta -1}+r(t)\left( \alpha -1-t\frac{f^{\prime }(t)}{f(t)}\right)
\text{.}  \label{ec03}
\end{equation}%
From (\ref{ec01}) and (\ref{ec02}), we get%
\begin{equation*}
t\frac{r^{\prime }(t)}{r(t)}=\frac{t}{r(t)}\left( \frac{f^{\prime }(t)}{F(t)}%
-r^{2}(t)\right) =t\left( \frac{f^{\prime }(t)}{f(t)}-r(t)\right) \text{,}
\end{equation*}%
then%
\begin{eqnarray*}
t\frac{f^{\prime }(t)}{f(t)} &=&\alpha -1-\beta t^{\beta }-tr(t)+tr(t) \\
&=&\alpha -1-\beta t^{\beta }\text{.}
\end{eqnarray*}%
By replacing the above expression in (\ref{ec03}), we have%
\begin{eqnarray*}
\left[ t\frac{r^{\prime }(t)}{r(t)}\right] ^{\prime } &\leq &-\beta
^{2}t^{\beta -1}+r(t)\left( \alpha -1-\left( \alpha -1-\beta t^{\beta
}\right) \right)  \\
&=&-\beta ^{2}t^{\beta -1}+\beta t^{\beta }r(t) \\
&=&\beta t^{\beta }\left( -\frac{\beta }{t}+r(t)\right) \leq 0
\end{eqnarray*}%
since $tr(t)\leq \alpha \leq \beta $.
\end{proof}


Theorem \ref{th05} says that the lifetime of a parallel system consisting of two types of generalized gamma components
with parameters $\alpha \leq \beta
\leq 1$ is stochastically larger according to likelihood ratio
ordering when the scale parameters are more dispersed according to weakly majorization.

\bigskip

Note that, when $0<\alpha \leq 1$, Theorem 3.4 in \cite{Zhao:2011} can be
seen as a particular case of Theorem \ref{th05} since gamma distribution is
a particular case of generalized gamma distribution when $\beta =1$.

As an immediate consequence of Theorem \ref{th05}, we have the following
result which provides an upper bound of two random variables from a scale
model.

\begin{corollary}
Let $X_{\lambda _{1}},X_{\lambda _{2}}$ be independent nonnegative random
variables with $X_{\lambda _{i}}\sim F(\lambda _{i}t)$ for $i=1,2$. Let $%
Y_{1},Y_{2}$ be independent nonnegative random variables with a common
distribution $Y_{i}\sim F(\lambda t)$ for $i=1,2$. Assume $tr(t)$ and $%
tr^{\prime }(t)/r(t)$ are both decreasing in $t$ and $t^{2}r^{\prime }(t)$
is increasing in $t$. Suppose $\lambda \leq \min (\lambda _{1},\lambda _{2})$%
, then%
\begin{equation*}
\lambda \leq \frac{\lambda _{1}+\lambda _{2}}{2}\Rightarrow X_{2:2}\leq
_{lr}Y_{2:2}.
\end{equation*}
\end{corollary}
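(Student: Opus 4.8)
The plan is to obtain this as a two-step consequence of Theorem \ref{th05}. That theorem lets us modify only one scale parameter at a time while holding a common component fixed, whereas here both parameters change (from $\lambda_1,\lambda_2$ to $\lambda,\lambda$). So I would pass through an intermediate parallel system: let $Z_1\sim F(\lambda_1 t)$ and $Z_2\sim F(\lambda t)$ be independent and set $Z_{2:2}=\max(Z_1,Z_2)$. The goal is then the chain $X_{2:2}\leq_{lr}Z_{2:2}\leq_{lr}Y_{2:2}$, which closes the proof by transitivity of the likelihood ratio order. Since $X_{2:2}=\max(X_{\lambda_1},X_{\lambda_2})$ is symmetric in its two arguments, no loss of generality is incurred by any relabeling used below.

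For the first link I apply Theorem \ref{th05} with $\lambda_1$ playing the role of the common component (``$\lambda$'' in that statement), $\lambda_2$ playing the role of ``$\lambda_1$'', and $\lambda$ playing the role of ``$\lambda_1^{\ast}$''; this compares the pair $(X_{\lambda_1},X_{\lambda_2})$ with $(Z_1,Z_2)$. The hypothesis ``$\lambda_1^{\ast}=\min$'' becomes $\lambda=\min(\lambda_1,\lambda_2,\lambda)$, true because $\lambda\leq\min(\lambda_1,\lambda_2)$; the weak majorization $(\lambda_2,\lambda_1)\overset{w}{\leq}(\lambda,\lambda_1)$ reduces to $\min(\lambda_1,\lambda_2)\geq\lambda$ and $\lambda_2\geq\lambda$, both of which hold. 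Hence $X_{2:2}\leq_{lr}Z_{2:2}$. For the second link I apply Theorem \ref{th05} again, now with $\lambda$ as the common component, $\lambda_1$ as ``$\lambda_1$'' and $\lambda$ itself as ``$\lambda_1^{\ast}$'', comparing $(Z_1,Z_2)$ with $(Y_1,Y_2)$; here the ``min'' condition is immediate and the weak majorization $(\lambda_1,\lambda)\overset{w}{\leq}(\lambda,\lambda)$ amounts to $\lambda_1\geq\lambda$. This yields $Z_{2:2}\leq_{lr}Y_{2:2}$, and combining the two links gives $X_{2:2}\leq_{lr}Y_{2:2}$. Note that the distributional assumptions on $r$ ($tr(t)$ decreasing, $tr^{\prime}(t)/r(t)$ decreasing, $t^2r^{\prime}(t)$ increasing) are exactly those required by Theorem \ref{th05}, so both applications are legitimate; I would also record that the stated premise $\lambda\leq(\lambda_1+\lambda_2)/2$ is automatically implied by $\lambda\leq\min(\lambda_1,\lambda_2)$.

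The only point demanding care — and the mild obstacle I anticipate — is the bookkeeping: correctly matching the roles of $\lambda_1$, $\lambda$, $\lambda_1^{\ast}$ in Theorem \ref{th05} at each of the two stages, and checking at each stage that the parameter being decreased is the minimum of the three relevant scale parameters, so that the ``$\lambda_1^{\ast}=\min$'' hypothesis is genuinely met. Beyond this there is no analytic difficulty, as all the work has already been done in Theorem \ref{th05}.
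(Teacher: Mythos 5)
Your proof is correct. The paper gives no argument at all for this corollary --- it is announced as ``an immediate consequence of Theorem \ref{th05}'' --- so there is nothing to compare against line by line; what you have done is supply the missing detail, and it is genuinely needed. A single invocation of Theorem \ref{th05} only permits changing one scale parameter while the other component is held in common, whereas here both parameters move (from $(\lambda_1,\lambda_2)$ to $(\lambda,\lambda)$), so the two-step chain through the intermediate system with parameters $(\lambda_1,\lambda)$, i.e.
\begin{equation*}
(\lambda_1,\lambda_2)\ \longrightarrow\ (\lambda_1,\lambda)\ \longrightarrow\ (\lambda,\lambda),
\end{equation*}
is exactly the right decomposition. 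Your bookkeeping at each stage is accurate: in both links the parameter being lowered to $\lambda$ is indeed the minimum of the three relevant scale parameters, the weak majorization hypotheses reduce to $\lambda\leq\min(\lambda_1,\lambda_2)$ and $\lambda\leq\lambda_1$ respectively, and the distributional conditions on $r$ are those assumed in the corollary, so both applications of Theorem \ref{th05} are legitimate. The conclusion then follows from transitivity of $\leq_{lr}$ (the ratio of densities along the chain is a product of two nonnegative increasing functions). Your side remark that the displayed hypothesis $\lambda\leq(\lambda_1+\lambda_2)/2$ is already implied by $\lambda\leq\min(\lambda_1,\lambda_2)$ is also correct; it simply restates the second weak-majorization inequality, which is redundant here.
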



Next, we extend the study of likelihood ratio ordering between largest order
statistics from the two-variable case to multiple-outlier scale models.

\begin{theorem}
\label{th12}Let $X_{1},\ldots ,X_{n}$ be independent nonnegative random
variables such that $X_{i}\sim F(\lambda _{1}t)$ for $i=1,\ldots ,p$\ and $%
X_{j}\sim F(\lambda t)$ for $j=p+1,\ldots ,n$, with $\lambda _{1},\lambda >0$
and $F$ is an absolutely continuous distribution. Let $Y_{1},\ldots ,Y_{n}$
$n$
be independent nonnegative random variables with $Y_{i}\sim F(\lambda
_{1}^{\ast }t)$ for $i=1,\ldots ,p$ and $Y_{j}\sim F(\lambda t)$ for $%
j=p+1,\ldots ,n$, with $\lambda _{1}^{\ast },\lambda >0$. Let $r$ be the
reverse hazard rate function of $F$. Assume $tr(t)$ and $tr^{\prime }(t)/r(t)
$ are both decreasing in $t$. Suppose $\lambda _{1}^{\ast }=\min (\lambda
,\lambda _{1},\lambda _{1}^{\ast })$, then%
\begin{equation*}
(\underbrace{\lambda _{1},\ldots ,\lambda _{1}}_{p},\underbrace{%
\lambda,\ldots ,\lambda}_{q}) \overset{w}{\leq} (\underbrace{\lambda _{1}^{\ast
},\ldots ,\lambda _{1}^{\ast }}_{p},\underbrace{\lambda,\ldots ,\lambda}%
_{q}) \Rightarrow \frac{r_{n:n}^{\ast }(t)}{r_{n:n}(t)}\text{ is increasing
in }t,
\end{equation*}
where $q=n-p$.

\end{theorem}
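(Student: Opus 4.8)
The plan is to mimic the proof of Theorem~\ref{th01}, with the two single summands replaced by $p$-fold and $q$-fold repeated copies. By~(\ref{ecrh}), for the two multiple-outlier models
\begin{equation*}
r_{n:n}(t)=p\lambda _{1}r(\lambda _{1}t)+q\lambda r(\lambda t),\qquad r_{n:n}^{\ast }(t)=p\lambda _{1}^{\ast }r(\lambda _{1}^{\ast }t)+q\lambda r(\lambda t).
\end{equation*}
Put $\phi (t)=r_{n:n}^{\ast }(t)/r_{n:n}(t)$. First I would differentiate and note that the sign of $\phi ^{\prime }(t)$ equals the sign of $(r_{n:n}^{\ast })^{\prime }(t)\,r_{n:n}(t)-r_{n:n}^{\ast }(t)\,(r_{n:n})^{\prime }(t)$, where $(r_{n:n})^{\prime }(t)=p\lambda _{1}^{2}r^{\prime }(\lambda _{1}t)+q\lambda ^{2}r^{\prime }(\lambda t)$ and likewise for the starred version.

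Next I would expand that product difference: the terms $q^{2}\lambda ^{3}r(\lambda t)r^{\prime }(\lambda t)$ cancel, and the six surviving cross terms regroup into three blocks carrying the weights $p^{2}$, $pq$ and $pq$. Multiplying through by $t^{3}>0$ and introducing $\psi (t)=tr(t)$ and $\eta (t)=-tr^{\prime }(t)/r(t)$ exactly as in Theorem~\ref{th01}, this should yield
\begin{equation*}
\phi ^{\prime }(t)\overset{\text{sign}}{=}p^{2}\psi (\lambda _{1}^{\ast }t)\psi (\lambda _{1}t)\bigl(\eta (\lambda _{1}t)-\eta (\lambda _{1}^{\ast }t)\bigr)+pq\,\psi (\lambda _{1}^{\ast }t)\psi (\lambda t)\bigl(\eta (\lambda t)-\eta (\lambda _{1}^{\ast }t)\bigr)+pq\,\psi (\lambda t)\psi (\lambda _{1}t)\bigl(\eta (\lambda _{1}t)-\eta (\lambda t)\bigr),
\end{equation*}
which for $p=q=1$ is exactly the expression obtained in the proof of Theorem~\ref{th01}.

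From here the reasoning is the same as there. Since $tr(t)$ is decreasing we have $r^{\prime }\le 0$, so $\psi \ge 0$ and $\eta \ge 0$; moreover $\psi $ is decreasing, and $\eta $ is increasing because $tr^{\prime }(t)/r(t)$ is decreasing. The hypothesis $\lambda _{1}^{\ast }=\min (\lambda ,\lambda _{1},\lambda _{1}^{\ast })$ together with the weak majorization forces either $\lambda _{1}^{\ast }\le \lambda \le \lambda _{1}$ or $\lambda _{1}^{\ast }\le \lambda _{1}\le \lambda $. In the first case the monotonicity of $\eta $ makes all three brackets nonnegative, so $\phi ^{\prime }(t)\ge 0$. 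In the second case the third bracket is nonpositive; I would bound $\phi ^{\prime }(t)$ from below by replacing $\psi (\lambda _{1}^{\ast }t)$ by $\psi (\lambda t)$ in the first block and by $\psi (\lambda _{1}t)$ in the second block --- both legitimate since $\psi $ is decreasing and the corresponding brackets are nonnegative --- after which every block carries the common factor $\psi (\lambda t)\psi (\lambda _{1}t)$, the two $\eta (\lambda t)$ contributions cancel, and one is left with $\phi ^{\prime }(t)\ge p(p+q)\,\psi (\lambda t)\psi (\lambda _{1}t)\bigl(\eta (\lambda _{1}t)-\eta (\lambda _{1}^{\ast }t)\bigr)\ge 0$. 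Hence $r_{n:n}^{\ast }(t)/r_{n:n}(t)$ is increasing in $t$ in both cases.

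The only genuine computation, and the step most liable to error, is the bookkeeping in the derivative expansion: one has to check that the six surviving cross terms really do assemble into exactly those three blocks in $\psi $ and $\eta $ with weights $p^{2}$, $pq$, $pq$. Once this identity is secured, the rest is a verbatim adaptation of Theorem~\ref{th01}; the extra factors $p$ and $q$ cause no trouble since the decisive combination is $p^{2}+pq=p(p+q)>0$.
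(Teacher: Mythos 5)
Your proposal is correct and follows essentially the same route as the paper's own proof: the same expansion of $\phi'(t)$ into the three blocks with weights $p^{2}$, $pq$, $pq$ in terms of $\psi(t)=tr(t)$ and $\eta(t)=-tr'(t)/r(t)$, the same two cases $\lambda_{1}^{\ast}\leq\lambda\leq\lambda_{1}$ and $\lambda_{1}^{\ast}\leq\lambda_{1}\leq\lambda$, and the same lower bound $np\,\psi(\lambda t)\psi(\lambda_{1}t)\bigl(\eta(\lambda_{1}t)-\eta(\lambda_{1}^{\ast}t)\bigr)\geq 0$ in the second case. The identity you flag as "most liable to error" does hold exactly as you state it (the $q^{2}$ terms cancel and the six cross terms regroup as claimed), so nothing is missing.
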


\begin{proof}
From (\ref{ecrh}) we get the reverse hazard rate function of $X_{n:n}$:%
\begin{equation*}
r_{n:n}(t)=p\lambda _{1}r(\lambda _{1}t)+q\lambda r(\lambda t),
\end{equation*}%
where $p+q=n$. Let%
\begin{equation*}
\phi (t)=\frac{r_{n:n}^{\ast }(t)}{r_{n:n}(t)}=\frac{p\lambda _{1}^{\ast
}r(\lambda _{1}^{\ast }t)+q\lambda r(\lambda t)}{p\lambda _{1}r(\lambda
_{1}t)+q\lambda r(\lambda t)}\text{.}
\end{equation*}%
On differentiating $\phi (t)$ with respect to $t$, we get%
\begin{eqnarray*}
\phi ^{\prime }(t) &\overset{\text{sign}}{=}&
t^{3}\left( p\left( \lambda _{1}^{\ast
}\right) ^{2}r^{\prime }(\lambda _{1}^{\ast }t)+q\lambda ^{2}r^{\prime
}(\lambda t)\right) \left( p\lambda _{1}r(\lambda _{1}t)+q\lambda r(\lambda
t)\right) \\
&&-t^{3}\left( p\lambda _{1}^{\ast }r(\lambda _{1}^{\ast }t)+q\lambda
r(\lambda t)\right) \left( p\lambda _{1}^{2}r^{\prime }(\lambda
_{1}t)+q\lambda ^{2}r^{\prime }(\lambda t)\right) \\
&=&p^{2}\lambda _{1}\lambda _{1}^{\ast }t^{3}\left( \lambda _{1}^{\ast
}r^{\prime }(\lambda _{1}^{\ast }t)r(\lambda _{1}t)-\lambda _{1}r(\lambda
_{1}^{\ast }t)r^{\prime }(\lambda _{1}t)\right) \\
&&+pq\lambda _{1}\lambda t^{3}\left( \lambda r^{\prime }(\lambda t)r(\lambda
_{1}t)-\lambda _{1}r(\lambda t)r^{\prime }(\lambda _{1}t)\right) \\
&&+pq\lambda \lambda _{1}^{\ast }t^{3}\left( \lambda _{1}^{\ast }r^{\prime
}(\lambda _{1}^{\ast }t)r(\lambda t)-\lambda r(\lambda _{1}^{\ast
}t)r^{\prime }(\lambda t)\right) \\
&=&p^{2}\lambda _{1}\lambda _{1}^{\ast }t^{2}r(\lambda _{1}^{\ast
}t)r(\lambda _{1}t)\left( \lambda _{1}^{\ast }t\frac{r^{\prime }(\lambda
_{1}^{\ast }t)}{r(\lambda _{1}^{\ast }t)}-\lambda _{1}t\frac{r^{\prime
}(\lambda _{1}t)}{r(\lambda _{1}t)}\right) \\
&&+pq\lambda _{1}\lambda t^{2}r(\lambda t)r(\lambda _{1}t)\left( \lambda t%
\frac{r^{\prime }(\lambda t)}{r(\lambda t)}-\lambda _{1}t\frac{r^{\prime
}(\lambda _{1}t)}{r(\lambda _{1}t)}\right) \\
&&+pq\lambda \lambda _{1}^{\ast }t^{2}r(\lambda _{1}^{\ast }t)r(\lambda
t)\left( \lambda _{1}^{\ast }t\frac{r^{\prime }(\lambda _{1}^{\ast }t)}{%
r(\lambda _{1}^{\ast }t)}-\lambda t\frac{r^{\prime }(\lambda t)}{r(\lambda t)%
}\right) \\
&=&p^{2}\psi (\lambda _{1}^{\ast }t)\psi (\lambda _{1}t)\left( -\eta
(\lambda _{1}^{\ast }t)+\eta (\lambda _{1}t)\right) +pq\psi (\lambda t)\psi
(\lambda _{1}t)\left( -\eta (\lambda t)+\eta (\lambda _{1}t)\right) \\
&&+pq\psi (\lambda _{1}^{\ast }t)\psi (\lambda t)\left( -\eta (\lambda
_{1}^{\ast }t)+\eta (\lambda t)\right) \text{,}
\end{eqnarray*}%
where
\begin{equation*}
\psi \left( t\right) =tr(t)\text{ and }\eta \left( t\right) =-t\frac{%
r^{\prime }(t)}{r(t)}.
\end{equation*}%
Note that $\psi \left( t\right) ,\eta \left( t\right) \geq 0$ for all $t\geq
0$. By the assumptions, we know that $\psi \left( t\right) $ is decreasing
and $\eta \left( t\right) $ is increasing in $t$. If $\lambda _{1}^{\ast
}=\min (\lambda ,\lambda _{1},\lambda _{1}^{\ast })$ and $\left( \lambda
_{1},\ldots ,\lambda _{1},\lambda ,\ldots ,\lambda \right)
 \overset{w}{\leq}
\left( \lambda _{1}^{\ast },\ldots ,\lambda _{1}^{\ast },\lambda ,\ldots
,\lambda \right) $, then $\lambda _{1}^{\ast }\leq \lambda \leq \lambda _{1}$
or $\lambda _{1}^{\ast }\leq \lambda _{1}\leq \lambda $. When $\lambda
_{1}^{\ast }\leq \lambda \leq \lambda _{1}$, it is easy to check that $\phi
^{\prime }(t)\geq 0$ since $\eta \left( \lambda _{1}^{\ast }t\right) \leq
\eta \left( \lambda t\right) \leq \eta \left( \lambda _{1}t\right) $. When $%
\lambda _{1}^{\ast }\leq \lambda _{1}\leq \lambda $, we get
\begin{eqnarray*}
\phi ^{\prime }(t) &\geq &p^{2}\psi (\lambda t)\psi (\lambda _{1}t)\left(
-\eta (\lambda _{1}^{\ast }t)+\eta (\lambda _{1}t)\right) +pq\psi (\lambda
t)\psi (\lambda _{1}t)\left( -\eta (\lambda t)+\eta (\lambda _{1}t)\right) \\
&&+pq\psi (\lambda _{1}t)\psi (\lambda t)\left( -\eta (\lambda _{1}^{\ast
}t)+\eta (\lambda t)\right) \\
&=&np\psi (\lambda t)\psi (\lambda _{1}t)\left( -\eta (\lambda _{1}^{\ast
}t)+\eta (\lambda _{1}t)\right) \geq 0\text{.}
\end{eqnarray*}%
Therefore $r_{n:n}^{\ast }(t)/r_{n:n}(t)$ is increasing in $t$.
\end{proof}


In the next result, we extend Theorem \ref{th05}
from the two-variable case to multiple-outlier scale models.


\begin{theorem}
\label{th13}Let $X_{1},\ldots ,X_{n}$ be independent nonnegative random
variables such that $X_{i}\sim F(\lambda _{1}t)$ for $i=1,\ldots ,p$\ and $%
X_{j}\sim F(\lambda t)$ for $j=p+1,\ldots ,n$, with $\lambda _{1},\lambda >0$
and $F$ is an absolutely continuous distribution. Let $Y_{1},\ldots ,Y_{n}$
$n$
be independent nonnegative random variables with $Y_{i}\sim F(\lambda
_{1}^{\ast }t)$ for $i=1,\ldots ,p$ and $Y_{j}\sim F(\lambda t)$ for $%
j=p+1,\ldots ,n$, with $\lambda _{1}^{\ast },\lambda >0$. Let $r$ be the
reverse hazard rate function of $F$. Assume $tr(t)$ and $tr^{\prime }(t)/r(t)
$ are both decreasing in $t$ and $t^{2}r^{\prime }(t)$ is increasing in $t$.
Suppose $\lambda _{1}^{\ast }=\min (\lambda ,\lambda _{1},\lambda _{1}^{\ast
})$, then%
\begin{equation*}
(\underbrace{\lambda _{1},\ldots ,\lambda _{1}}_{p},\underbrace{%
\lambda,\ldots ,\lambda}_{q})  \overset{w}{\leq} (\underbrace{\lambda _{1}^{\ast
},\ldots ,\lambda _{1}^{\ast }}_{p},\underbrace{\lambda,\ldots ,\lambda}%
_{q}) \Rightarrow X_{n:n}\leq _{lr}Y_{n:n}.
\end{equation*}
\end{theorem}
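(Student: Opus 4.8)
The plan is to follow the template of the proof of Theorem~\ref{th05}, simply replacing its two inputs (Theorems~\ref{th01} and~\ref{th02}) by their multiple-outlier counterparts. I would split the argument into a monotone-ratio step, a reverse-hazard-rate step, and a combination step.

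First, the monotone-ratio step: under the standing hypotheses that $tr(t)$ and $tr'(t)/r(t)$ are both decreasing, together with $\lambda_1^{\ast}=\min(\lambda,\lambda_1,\lambda_1^{\ast})$, Theorem~\ref{th12} applies directly to the two outlier vectors $(\underbrace{\lambda_1,\dots,\lambda_1}_{p},\underbrace{\lambda,\dots,\lambda}_{q})$ and $(\underbrace{\lambda_1^{\ast},\dots,\lambda_1^{\ast}}_{p},\underbrace{\lambda,\dots,\lambda}_{q})$, since by hypothesis the first is weakly majorized by the second. Hence $\phi(t)=r_{n:n}^{\ast}(t)/r_{n:n}(t)$ is increasing in $t$, where $r_{n:n}$ and $r_{n:n}^{\ast}$ denote the reverse hazard rates of $X_{n:n}$ and $Y_{n:n}$.

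Second, the reverse-hazard-rate step: the two outlier vectors above are particular elements of $\Re^n$, and the weak-majorization hypothesis of the theorem is exactly $(\lambda_1,\dots,\lambda_1,\lambda,\dots,\lambda)\overset{w}{\leq}(\lambda_1^{\ast},\dots,\lambda_1^{\ast},\lambda,\dots,\lambda)$. Since $tr(t)$ is decreasing and $t^2 r'(t)$ is increasing, both of which are among the standing hypotheses, Theorem~\ref{th02} applies verbatim and yields $X_{n:n}\leq_{rh}Y_{n:n}$; note that this step does not even require $\lambda_1^{\ast}=\min(\lambda,\lambda_1,\lambda_1^{\ast})$.

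Finally, the combination step: writing $g_{n:n},f_{n:n}$ for the densities and $G_{n:n},F_{n:n}$ for the c.d.f.'s of $Y_{n:n},X_{n:n}$, one has $g_{n:n}(t)/f_{n:n}(t)=\phi(t)\cdot\bigl(G_{n:n}(t)/F_{n:n}(t)\bigr)$, a product of two nonnegative increasing functions — the first increasing by the monotone-ratio step, the second increasing because $X_{n:n}\leq_{rh}Y_{n:n}$. Hence $g_{n:n}/f_{n:n}$ is increasing, i.e. $X_{n:n}\leq_{lr}Y_{n:n}$; equivalently, this last deduction is Theorem~1.C.4 of \cite{Shaked:2007}, the same device used in the proof of Theorem~\ref{th05}. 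I do not expect any genuinely new obstacle here: all the analytic difficulty already lives in Theorems~\ref{th12} and~\ref{th02}, in particular in controlling the sign of $\phi'(t)$ across the two exhaustive orderings $\lambda_1^{\ast}\leq\lambda\leq\lambda_1$ and $\lambda_1^{\ast}\leq\lambda_1\leq\lambda$; the present statement is then a clean corollary, exactly as Theorem~\ref{th05} follows from Theorems~\ref{th01} and~\ref{th02}.
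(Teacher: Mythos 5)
Your proposal is correct and matches the paper's own proof essentially verbatim: the paper likewise invokes Theorem~\ref{th12} for the monotone ratio of reverse hazard rates, Theorem~\ref{th02} for $X_{n:n}\leq_{rh}Y_{n:n}$, and Theorem~1.C.4 of \cite{Shaked:2007} to conclude the likelihood ratio ordering. Your explicit factorization $g_{n:n}/f_{n:n}=\phi\cdot(G_{n:n}/F_{n:n})$ is just an unpacking of that last cited result and adds nothing that needs checking.
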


\begin{proof}
From Theorem \ref{th12}, we know that $r_{n:n}^{\ast }(t)/r_{n:n}(t)$ is
increasing in $t$ when
$tr(t)$ and $tr^{\prime }(t)/r(t)
$ are both decreasing in $t$.
%
Since $\left( \lambda _{1},\ldots
,\lambda _{1},\lambda ,\ldots ,\lambda \right)  \overset{w}{\leq}\left( \lambda
_{1}^{\ast },\ldots ,\lambda _{1}^{\ast },\lambda ,\ldots ,\lambda \right) $%
and $t^{2}r^{\prime }(t)$ is increasing in $t$
, then $X_{n:n}\leq _{rh}Y_{n:n}$ from Theorem \ref{th02}. Thus the required
result follows from Theorem 1.C.4 in \cite{Shaked:2007}.
\end{proof}


Note that, when $0<\alpha \leq 1$, Theorem 3.1 in \cite{ZhaoB:2013} can be
seen as a particular case of Theorem \ref{th13} when $\lambda _{1}^{\ast
}\leq \lambda _{1}\leq \lambda $ since gamma distribution is a particular
case of generalized gamma distribution when $\beta =1$.

Next, we establish the analog of Theorem \ref{th13} when both the baseline
distributions and the scale parameters are different in the multiple-outlier
scale models.

\begin{theorem}
\label{th21}Let $X_{1},\ldots ,X_{n}$ be independent nonnegative random
variables such that $X_{i}\sim F(\lambda _{1}t)$ for $i=1,\ldots ,p$\ and $%
X_{j}\sim G(\lambda t)$ for $j=p+1,\ldots ,n$, with $\lambda _{1},\lambda >0$
and $F$ is an absolutely continuous distribution. Let $X_{1}^{\ast },\ldots
,X_{n}^{\ast }$ be $n$
independent nonnegative random variables with $%
X_{i}^{\ast }\sim F(\lambda _{1}^{\ast }t)$ for $i=1,\ldots ,p$ and $%
X_{j}^{\ast }\sim G(\lambda t)$ for $j=p+1,\ldots ,n$, with $\lambda
_{1}^{\ast },\lambda >0$. Let $r_{F}$ and $r_{G}$ be the reverse hazard rate
functions of $F$ and $G$, respectively. Assume $tr_{F}(t)$ and $%
tr_{F}^{\prime }(t)/r_{F}(t)$ are both decreasing in $t$. Suppose $%
r_{F}(t)/r_{G}(t)$ is increasing in $t$, then%
\begin{equation*}
\lambda _{1}^{\ast }=\min (\lambda ,\lambda _{1},\lambda _{1}^{\ast
})\Rightarrow X_{n:n}\leq _{lr}X_{n:n}^{\ast }.
\end{equation*}
\end{theorem}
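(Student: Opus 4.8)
The plan is to follow the two-step route used for Theorems \ref{th05} and \ref{th13}: establish the reverse hazard rate ordering $X_{n:n}\leq_{rh}X_{n:n}^{\ast}$, show that $r_{n:n}^{\ast}(t)/r_{n:n}(t)$ is increasing in $t$, and then invoke Theorem 1.C.4 of \cite{Shaked:2007}; since $f_{n:n}=r_{n:n}F_{n:n}$, the latter amounts to the remark that a product of two nonnegative increasing functions is increasing. From (\ref{ecrh}),
\begin{equation*}
r_{n:n}(t)=p\lambda_{1}r_{F}(\lambda_{1}t)+q\lambda r_{G}(\lambda t),\qquad r_{n:n}^{\ast}(t)=p\lambda_{1}^{\ast}r_{F}(\lambda_{1}^{\ast}t)+q\lambda r_{G}(\lambda t),
\end{equation*}
with $q=n-p$. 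For the first step, writing $\psi_{F}(t)=tr_{F}(t)$, the hypothesis that $\psi_{F}$ is decreasing together with $\lambda_{1}^{\ast}\leq\lambda_{1}$ gives $\lambda_{1}r_{F}(\lambda_{1}t)=\psi_{F}(\lambda_{1}t)/t\leq\psi_{F}(\lambda_{1}^{\ast}t)/t=\lambda_{1}^{\ast}r_{F}(\lambda_{1}^{\ast}t)$, so $r_{n:n}(t)\leq r_{n:n}^{\ast}(t)$ for every $t>0$; equivalently $F_{n:n}^{\ast}(t)/F_{n:n}(t)$ is increasing and $X_{n:n}\leq_{rh}X_{n:n}^{\ast}$. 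This step uses only that $tr_{F}(t)$ is decreasing (a single scale parameter moves), which is why, unlike in Theorem \ref{th02}, no hypothesis on $t^{2}r_{F}^{\prime}(t)$ is needed.

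For the second step I would differentiate $\phi(t)=r_{n:n}^{\ast}(t)/r_{n:n}(t)$ and repeat the bookkeeping from the proof of Theorem \ref{th12}, now keeping $r_{F}$ and $r_{G}$ apart. With $A=p\lambda_{1}^{\ast}r_{F}(\lambda_{1}^{\ast}t)$, $B=q\lambda r_{G}(\lambda t)$, $C=p\lambda_{1}r_{F}(\lambda_{1}t)$ one has $\phi=(A+B)/(C+B)$ and the numerator of $\phi^{\prime}$ equals $(A^{\prime}C-AC^{\prime})+(A^{\prime}B-AB^{\prime})+(B^{\prime}C-BC^{\prime})$. Introducing $\psi_{F}(t)=tr_{F}(t)$, $\psi_{G}(t)=tr_{G}(t)$, $\eta_{F}(t)=-tr_{F}^{\prime}(t)/r_{F}(t)$, $\eta_{G}(t)=-tr_{G}^{\prime}(t)/r_{G}(t)$ and substituting $r_{F}(\lambda_{1}^{\ast}t)=\psi_{F}(\lambda_{1}^{\ast}t)/(\lambda_{1}^{\ast}t)$, $r_{F}^{\prime}(\lambda_{1}^{\ast}t)=-\eta_{F}(\lambda_{1}^{\ast}t)\psi_{F}(\lambda_{1}^{\ast}t)/((\lambda_{1}^{\ast})^{2}t^{2})$ (and likewise for the arguments $\lambda_{1}t$, $\lambda t$), multiplying by $t^{3}$ should yield
\begin{equation*}
\phi^{\prime}(t)\overset{\text{sign}}{=}p^{2}\psi_{F}(\lambda_{1}^{\ast}t)\psi_{F}(\lambda_{1}t)\left(\eta_{F}(\lambda_{1}t)-\eta_{F}(\lambda_{1}^{\ast}t)\right)+pq\,\psi_{F}(\lambda_{1}^{\ast}t)\psi_{G}(\lambda t)\left(\eta_{G}(\lambda t)-\eta_{F}(\lambda_{1}^{\ast}t)\right)+pq\,\psi_{G}(\lambda t)\psi_{F}(\lambda_{1}t)\left(\eta_{F}(\lambda_{1}t)-\eta_{G}(\lambda t)\right).
\end{equation*}

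It remains to check nonnegativity. From the hypotheses: $\psi_{F},\psi_{G}\geq0$; $\psi_{F}$ decreasing forces $r_{F}^{\prime}\leq0$, hence $\eta_{F}\geq0$; ``$tr_{F}^{\prime}(t)/r_{F}(t)$ decreasing'' is precisely ``$\eta_{F}$ increasing''; and, taking the logarithmic derivative and multiplying by $-t$, ``$r_{F}(t)/r_{G}(t)$ increasing'' is equivalent to $\eta_{F}(t)\leq\eta_{G}(t)$ for all $t$ (so also $\eta_{G}\geq0$). Since $\lambda_{1}^{\ast}\leq\lambda_{1}$ and $\lambda_{1}^{\ast}\leq\lambda$, the first term is $\geq0$ because $\eta_{F}(\lambda_{1}t)\geq\eta_{F}(\lambda_{1}^{\ast}t)$. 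For the remaining two, factor out $pq\,\psi_{G}(\lambda t)\geq0$ and set $a=\psi_{F}(\lambda_{1}^{\ast}t)$, $b=\psi_{F}(\lambda_{1}t)$, $u=\eta_{G}(\lambda t)-\eta_{F}(\lambda_{1}^{\ast}t)$, $v=\eta_{F}(\lambda_{1}t)-\eta_{G}(\lambda t)$; then $0\leq b\leq a$ ($\psi_{F}$ decreasing, $\lambda_{1}^{\ast}\leq\lambda_{1}$), $u\geq0$ (as $\eta_{G}(\lambda t)\geq\eta_{F}(\lambda t)\geq\eta_{F}(\lambda_{1}^{\ast}t)$), and $u+v=\eta_{F}(\lambda_{1}t)-\eta_{F}(\lambda_{1}^{\ast}t)\geq0$. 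The elementary fact that $au+bv\geq0$ --- clear when $v\geq0$, while if $v<0$ then $bv\geq av$ (as $b\leq a$), so $au+bv\geq a(u+v)\geq0$ --- gives $\phi^{\prime}(t)\geq0$. Thus $r_{n:n}^{\ast}/r_{n:n}$ is increasing and, combined with $X_{n:n}\leq_{rh}X_{n:n}^{\ast}$, Theorem 1.C.4 of \cite{Shaked:2007} yields $X_{n:n}\leq_{lr}X_{n:n}^{\ast}$.

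The derivative computation is the bulkiest but routine part. The genuinely delicate point is the sign analysis of the last two terms, neither of which is individually nonnegative in general: one has to combine them, using $\psi_{F}(\lambda_{1}^{\ast}t)\geq\psi_{F}(\lambda_{1}t)$ together with $\eta_{F}(\lambda_{1}t)\geq\eta_{F}(\lambda_{1}^{\ast}t)$. A convenient feature is that this grouping is insensitive to whether $\lambda\leq\lambda_{1}$ or $\lambda_{1}\leq\lambda$, so --- in contrast with the proofs of Theorems \ref{th12} and \ref{th13} --- no case distinction on the order of $\lambda$ and $\lambda_{1}$ is required; the trivial boundary situations ($p\in\{0,n\}$, or $t$ where $F$ or $G$ vanishes) are immediate.
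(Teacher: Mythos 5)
Your proposal is correct and follows essentially the same route as the paper: the reverse hazard rate comparison via monotonicity of $tr_F(t)$, the identical three-term decomposition of the sign of $\phi'(t)$ in terms of $\psi_F,\psi_G,\eta_F,\eta_G$, and the same regrouping of the two cross terms (your bound $au+bv\geq a(u+v)$ versus the paper's $au+bv\geq b(u+v)$ is the same elementary inequality), concluding with Theorem 1.C.4 of \cite{Shaked:2007}. No substantive differences.
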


\begin{proof}
From (\ref{ecrh}) we get the reverse hazard rate function of $X_{n:n}$:%
\begin{equation*}
r_{n:n}(t)=p\lambda _{1}r_{F}(\lambda _{1}t)+q\lambda r_{G}(\lambda t),
\end{equation*}%
where $p+q=n$. Similarly the reverse hazard rate function of $X^{*}_{n:n}$
is
\begin{equation*}
r^{*}_{n:n}(t)=p\lambda^{*}_{1}r_{F}(\lambda _{1}t)+q\lambda r_{G}(\lambda
t).
\end{equation*}%
Observe that $X_{j} \overset{st}{=}  X_{j}^{\ast }$ for $j=p+1,\ldots ,n$. By the
assumptions, we know that $tr_{F}(t)$ is decreasing in $t$ and $\lambda
_{1}^{\ast }\leq \lambda _{1}$, then we have $X_{n:n}\leq _{rh}X_{n:n}^{\ast
}$ since $r_{n:n}(t)\leq r_{n:n}^{\ast }(t)$ for all $t$. From Theorem 1.C.4
in \cite{Shaked:2007}, it is enough to prove that the ratio of their reverse
hazard rate functions is increasing, i.e., we need to show that the function
\begin{equation*}
\phi (t)=\frac{r_{n:n}^{\ast }(t)}{r_{n:n}(t)}=\frac{p\lambda _{1}^{\ast
}r_{F}(\lambda _{1}^{\ast }t)+q\lambda r_{G}(\lambda t)}{p\lambda
_{1}r_{F}(\lambda _{1}t)+q\lambda r_{G}(\lambda t)}
\end{equation*}%
is increasing in $t$. On differentiating $\phi (t)$ with respect to $t$, we
get%
\begin{eqnarray*}
\phi ^{\prime }(t)  &\overset{\text{sign}}{=}&
t^{3}\left( p\left( \lambda _{1}^{\ast
}\right) ^{2}r_{F}^{\prime }(\lambda _{1}^{\ast }t)+q\lambda
^{2}r_{G}^{\prime }(\lambda t)\right) \left( p\lambda _{1}r_{F}(\lambda
_{1}t)+q\lambda r_{G}(\lambda t)\right) \\
&&-t^{3}\left( p\lambda _{1}^{\ast }r_{F}(\lambda _{1}^{\ast }t)+q\lambda
r_{G}(\lambda t)\right) \left( p\lambda _{1}^{2}r_{F}^{\prime }(\lambda
_{1}t)+q\lambda ^{2}r_{G}^{\prime }(\lambda t)\right) .
\end{eqnarray*}%
Let us denote:
\begin{equation*}
\psi _{F}\left( t\right) =tr_{F}(t)\text{, }\eta _{F}\left( t\right) =-t%
\frac{r_{F}^{\prime }(t)}{r_{F}(t)}\text{, }\psi _{G}\left( t\right)
=tr_{G}(t)\text{ and }\eta _{G}\left( t\right) =-t\frac{r_{G}^{\prime }(t)}{%
r_{G}(t)}\text{,}
\end{equation*}%
then the derivative of $\phi (t)$ can be rewritten as
\begin{eqnarray*}
\phi ^{\prime }(t) &\overset{\text{sign}}{=} &
p^{2}\psi _{F}(\lambda _{1}^{\ast
}t)\psi _{F}(\lambda _{1}t)\left( -\eta _{F}(\lambda _{1}^{\ast }t)+\eta
_{F}(\lambda _{1}t)\right) +pq\psi _{G}(\lambda t)\psi _{F}(\lambda
_{1}t)\left( -\eta _{G}(\lambda t)+\eta _{F}(\lambda _{1}t)\right) \\
&&+pq\psi _{F}(\lambda _{1}^{\ast }t)\psi _{G}(\lambda t)\left( -\eta
_{F}(\lambda _{1}^{\ast }t)+\eta _{G}(\lambda t)\right) \text{.}
\end{eqnarray*}%
The assumption $r_{F}(t)/r_{G}(t)$ is increasing in $t$ is equivalent to $%
\eta _{F}\left( t\right) \leq \eta _{G}\left( t\right) $ for all $t$. In
addition, we know that $\eta _{F}\left( t\right) $ is increasing in $t$ and $%
\lambda _{1}^{\ast }\leq \lambda $ then $\eta _{F}(\lambda _{1}^{\ast
}t)\leq \eta _{F}(\lambda t)\leq \eta _{G}(\lambda t)$ for all $t$. By the
assumptions, we know that $\psi _{F}\left( t\right) $ is decreasing in $t$
and $\lambda _{1}^{\ast }\leq \lambda _{1}$, then%
\begin{eqnarray*}
\phi ^{\prime }(t) &\geq &p^{2}\psi _{F}(\lambda _{1}^{\ast }t)\psi
_{F}(\lambda _{1}t)\left( -\eta _{F}(\lambda _{1}^{\ast }t)+\eta
_{F}(\lambda _{1}t)\right) +pq\psi _{G}(\lambda t)\psi _{F}(\lambda
_{1}t)\left( -\eta _{F}(\lambda _{1}^{\ast }t)+\eta _{F}(\lambda
_{1}t)\right) \\
&=&p\psi _{F}(\lambda _{1}t)\left( -\eta _{F}(\lambda _{1}^{\ast }t)+\eta
_{F}(\lambda _{1}t)\right) \left( p\psi _{F}(\lambda _{1}^{\ast }t)+q\psi
_{G}(\lambda t)\right) \geq 0,
\end{eqnarray*}%
since $\eta _{F}\left( t\right) $ is increasing in $t$. Therefore $%
r_{n:n}^{\ast }(t)/r_{n:n}(t)$ is increasing in $t$.
\end{proof}


\section*{Acknowledgements}
{%
This article is dedicated to our friend Ramesh Gupta for his many
contributions and for his encouragement to new researchers.
The authors wish to thank the Associate Editor and an
anonymous referee for their comments which have greatly
improved the initial version of this manuscript.
} 
The research of {Nuria Torrado}
was supported by the Portuguese Government through the
Funda\c{c}\~{a}o para a Ci\^{e}ncia e Tecnologia (FCT) under the grant
SFRH/BPD/91832/2012 and partially supported by the Centro de Matem\'{a}tica
da Universidade de Coimbra (CMUC) under the project PEst-C/MAT/UI0324/2013.

\bigskip


\begin{thebibliography}{30}


\bibitem[Ali et al.(2008)]{Ali:2008} { {\small \textsc{Ali,
M.M., Woo, J. {\normalfont and} Nadarajah, S.} (2008). \newblock %
Generalized gamma variables with drought application. \newblock\emph{%
Journal of the Korean Statistical Society} \textbf{37}, 37--45.} }


\bibitem[Balakrishnan and Zhao(2013)]{Balakrishnan:2013} { {\small \textsc{%
Balakrishnan, N. {\normalfont and} Zhao, P.} (2013). \newblock Hazard rate
comparison of parallel systems with heterogeneous gamma components. \newblock%
\emph{Journal of Multivariate Analysis} \textbf{113}, 153--160. } }

\bibitem[Bon and P\u{a}lt\u{a}nea(1999)]{Bon:1999} { {\small \textsc{Bon,
J.L. {\normalfont and} P\u{a}lt\u{a}nea, E.} (1999). \newblock Ordering
properties of convolutions of exponential random variables. \newblock\emph{%
Lifetime Data Analysis} \textbf{5}, 185--192. } }


\bibitem[Chen et al.(2012)]{Chen:2012} { {\small \textsc{Chen,
Y., Karagiannidis, G.K., Lu, H. {\normalfont and} Cao, N.} (2012). \newblock %
Novel Approximations to the
Statistics of Products of Independent Random Variables and Their Applications in Wireless
Communications \newblock\emph{%
IEEE Transactions on Vehicular Technology} \textbf{61}, 443--454.} }

\bibitem[Dykstra et al.(1997)]{Dykstra:1997} { {\small \textsc{Dykstra,
S.C., Kochar, S.C. {\normalfont and} Rojo, J.} (1997). \newblock Stochastic
comparisons of parallel systems of heterogeneous exponential components. %
\newblock\emph{Journal of Statistical Planning and Inference} \textbf{65},
203--211. } }

\bibitem[Fang and Zhang(2013)]{Fang:2013} { {\small \textsc{Fang, L. {%
\normalfont and} Zhang, X.} (2013). \newblock Stochastic comparisons of
series systems with heterogeneous Weibull components. \newblock\emph{%
Statistics and Probability Letters} \textbf{83}, 1649--1653. } }

\bibitem[Gupta et al.(2006)]{Gupta:2006} { {\small \textsc{Gupta, R.C., Gupta, R.D. {\normalfont and}
Gupta, P.L.} (2006). %
\newblock Monotonicity of the (reversed) hazard rate of the (maximum)
minimum in bivariate distributions. \newblock\emph{Metrika} \textbf{63},  223--241.}}

\bibitem[Hu(1995)]{Hu:1995} { {\small \textsc{Hu, T.} (1995). \newblock %
Monotone coupling and stochastic ordering of order statistics. \newblock%
\emph{Syst. Sci. Math. Sci.} \textbf{8}, 209--214. } }

\bibitem[Joo and Mi (2010)]{Joo:2010} {{\small \textsc{Joo, S. {\normalfont %
and} Mi, J.} (2010). \newblock Some properties of hazard rate functions of
systems with two components. \newblock\emph{Journal of Statistical Planning
and Inference} \textbf{140}, 444--453.} }

\bibitem[Khaledi et al.(2011)]{Khaledi:2011} { {\small \textsc{Khaledi,
B-E., Farsinezhad, S. {\normalfont and} Kochar, S.C.} (2011). \newblock %
Stochastic comparisons of order statistics in the scale model. \newblock\emph{%
Journal of Statistical Planning and Inference} \textbf{141}, 276--286.} }

\bibitem[Khaledi and Kochar(2007)]{Khaledi:2007} { {\small \textsc{Khaledi,
B-E. {\normalfont and} Kochar, S.C.} (2007). \newblock Stochastic orderings
of order statistics of independent random variables with different scale
parameters. \newblock\emph{Communications in Statistics - Theory and Methods}
\textbf{36}, 1441--1449.} }

\bibitem[Khaledi and Kochar(2000)]{Khaledi:2000} { {\small \textsc{Khaledi,
B-E. {\normalfont and} Kochar, S.C.} (2000). \newblock Some new results on
stochastic comparisons of parallel systems. \newblock\emph{J. Appl. Probab}
\textbf{37}, 1123--1128.} }

\bibitem[Khaledi and Kochar(2006)]{Khaledi:2006} { {\small \textsc{Khaledi,
B-E. {\normalfont and} Kochar, S.C.} (2006). \newblock Weibull distribution:
some stochastic comparisons results. \newblock\emph{Journal of Statistical
Planning and Inference} \textbf{136}, 3121--3129.} }

\bibitem[Kochar and Kirmani(1996)]{Kochar:1996} { {\small \textsc{Kochar,
S. {\normalfont and} Kirmani, S.} (1996). \newblock Some new results on
normalized spacings from restricted families of distributions. \newblock%
\emph{\ Journal of Statistical Planning and Inference} \textbf{47}, 47--57.}}

\bibitem[Kochar and Xu(2009)]{Kochar:2009} { {\small \textsc{Kochar, S. {%
\normalfont and} Xu, M.} (2009). \newblock Comparisons of parallel systems
according to the convex transform order. \newblock\emph{Journal of Applied
Probability} \textbf{46}, 342--352.} }

\bibitem[Kochar(2012)]{Kochar:2012} { {\small \textsc{Kochar, S.C.} (2012). %
\newblock Stochastic Comparisons of Order Statistics and Spacings: A Review. %
\newblock\emph{ISRN Probability and Statistics} vol. 2012, Article ID
839473, 47 pages, 2012. doi:10.5402/2012/839473.} }

\bibitem[Lihong and Xinsheng(2005)]{Lihong:2005} { {\small \textsc{Lihong,
S. {\normalfont and} Xinsheng, Z.} (2005). \newblock Stochastic comparisons
of order statistics from gamma distributions. \newblock\emph{Journal of
Multivariate Analysis} \textbf{93}, 112--121.} }


\bibitem[Manning et al.(2005)]{Manning:2005} { {\small \textsc{Manning,
W.G., Basu, A. {\normalfont and} Mullahy, J.} (2005). \newblock %
Generalized modeling approaches to
risk adjustment of skewed outcomes data. \newblock\emph{%
Journal of Health Economics} \textbf{24}, 465--488.} }


\bibitem[Marshall et al.(2011)]{Marshall:2011} { {\small \textsc{Marshall,
A.~W., Olkin, I. {\normalfont and} Arnold, B.~C.} (2011). \newblock\emph{%
Inequalities: Theory of Majorization and Its Applications}. \newblock %
Springer, New York. } }

\bibitem[Marshall and Olkin(2007)]{Marshall:2007} { {\small \textsc{%
Marshall, A.~W. {\normalfont and} Olkin, I.} (2007). \newblock\emph{Life
distributions}. \newblock Springer, New York. } }

\bibitem[Misra and Misra(2013)]{Misra:2013} { {\small \textsc{Misra, N. {%
\normalfont and} Misra, A.K.} (2013). \newblock On comparison of reverse
hazard rates of two parallel systems comprising of independent gamma
components. \newblock\emph{Statistics and Probability Letters} \textbf{83},
1567--1570.} }

\bibitem[Pledger and Proschan(1971)]{Pledger:1971} { {\small \textsc{%
Pledger, P. {\normalfont and} Proschan, F.} (1971). \newblock Comparisons of
order statistics and of spacings from heterogeneous distributions. \newblock %
In:Rustagi, J.S.(Ed.), Optimizing Methods in Statistics. Academic Press, New
York, pp 89--113.} }

\bibitem[Shaked and Shanthikumar(2007)]{Shaked:2007} { {\small \textsc{%
Shaked, M. {\normalfont and} Shanthikumar, J.~G.} (2007). \newblock\emph{%
Stochastic Orders}. \newblock Springer, New York. } }

\bibitem[Torrado et al.(2010)]{Torrado:2010} { {\small \textsc{Torrado, N.,
Lillo, R.~E. {\normalfont and} Wiper, M.~P.} (2010). \newblock On the
conjecture of Kochar and Korwar. \newblock\emph{Journal of Multivariate
Analysis} \textbf{101}, 1274--1283.} }

\bibitem[Torrado and Veerman(2012)]{Torrado:2012} { {\small \textsc{%
Torrado, N. {\normalfont and} Veerman, J.~J.~P.} (2012). \newblock %
Asymptotic reliability theory of $k$-out-of-$n$ systems. \newblock\emph{%
Journal of Statistical Planning and Inference} \textbf{142}, 2646--2655.} }

\bibitem[Torrado and Lillo(2013)]{Torrado:2013} { {\small \textsc{Torrado,
N. {\normalfont and} Lillo, R.~E.} (2013). \newblock On stochastic
properties of spacings with applications in multiple-outlier models. %
\newblock In : Li, H. and Li, X.(Ed.), Stochastic Orders in Reliability and
Risk. Springer Lecture Notes in Statistics, 103--123.} }


\bibitem[Torrado and Kochar(2015)]{Torrado:2015} { {\small \textsc{Torrado,
N. {\normalfont and} Kochar, S.C.} \newblock Stochastic order relations among parallel systems from Weibull distributions. To appear in \newblock\emph{Journal of Applied Probability} \textbf{52}(1), March 2015.} }


\bibitem[Zhao(2011)]{Zhao:2011} { {\small \textsc{Zhao, P.} (2011). %
\newblock On parallel systems with heterogeneous gamma components. \newblock%
\emph{Probability in the Engineering and Informational Sciences} \textbf{25}%
, 369--391.} }

\bibitem[Zhao and Balakrishnan(2011)]{ZhaoB:2011} { {\small \textsc{Zhao,
P. {\normalfont and} Balakrishnan, N.} (2011). \newblock New results on
comparisons of parallel systems with heterogeneous gamma components. %
\newblock\emph{Statistics and Probability Letters} \textbf{81}, 36--44.} }

\bibitem[Zhao and Balakrishnan(2013)]{ZhaoB:2013} { {\small \textsc{Zhao,
P. {\normalfont and} Balakrishnan, N.} (2013). \newblock Comparisons of
largest order statistics from multiple-outlier gamma models. \newblock\emph{%
Methodol Comput Appl Probab}. DOI 10.1007/s11009-013-9377-0.} }

\end{thebibliography}
\end{document}